\theoremstyle{change}
\newtheorem{definition}{Definition:}[section]
\newtheorem{proposition}[definition]{Proposition:}
\newtheorem{theorem}[definition]{Theorem:}
\newtheorem{lemma}[definition]{Lemma:}
\newtheorem{corollary}[definition]{Corollary:}
{\theorembodyfont{\rmfamily}
\newtheorem{remark}[definition]{Remark:}
}
{\theorembodyfont{\rmfamily}

}
\newenvironment{proof}
  {{\bf Proof:}}
  {\qquad \hspace*{\fill} $\Box$}
\newcommand{\fa}{\mathfrak{a}}
\newcommand{\fg}{\mathfrak{g}}
\newcommand{\fn}{\mathfrak{n}}
\newcommand{\fp}{\mathfrak{p}}
\newcommand{\Ad}{\operatorname{Ad}}
\newcommand{\inner}{\operatorname{int}}
\newcommand{\cl}{\operatorname{cl}}
\newcommand{\fix}{\operatorname{fix}}
\newcommand{\rmd}{\mathrm{d}}
\newcommand{\rme}{\mathrm{e}}
\newcommand{\EC}{\mathcal{E}}
\newcommand{\MC}{\mathcal{M}}
\newcommand{\OC}{\mathcal{O}}
\newcommand{\SC}{\mathcal{S}}
\newcommand{\UC}{\mathcal{U}}
\newcommand{\WC}{\mathcal{W}}
\newcommand{\AC}{\mathcal{A}}
\newcommand{\FC}{\mathcal{F}}
\newcommand{\F}{\mathbb{F}}
\newcommand{\R}{\mathbb{R}}
\begin{document}

\title{Control systems on flag manifolds and their chain control sets}
\author{V. Ayala\thanks{%
Supported by Conicyt, Proyecto Fondecyt n$%
{{}^\circ}%
1150292$} \\
Instituto de Alta Investigaci\'{o}n\\
Universidad de Tarapac\'{a}\\
Casilla 7D, Arica, Chile \and A. Da Silva\thanks{%
Supported by Fapesp grant $n^{o}$ 2013/19756-8} and L. A. B. San Martin \\
Instituto de Matem\'{a}tica,\thanks{%
The third author was supported by CNPq grant 303755/09-1, FAPESP grant
2012/18780-0, and CNPq/Universal grant 476024/2012-9.}\\
Universidade Estadual de Campinas\\
Cx. Postal 6065, 13.081-970 Campinas-SP, Brasil}
\maketitle

\begin{abstract}
In this paper we shown that the chain control sets of induced systems on flag manifolds coincides with their analogous one defined via semigroup actions. Consequently, any chain control set of the system contains a control set with nonempty interior and, if the number of the control sets with nonempty interior coincides with the number of the chain control sets, then the closure of any control set with nonempty interior is a chain control set.
\end{abstract}

\textbf{\noindent{}AMS 2000 Subject Classification}\textit{. 37B35, 37B25, 93D30}

\noindent \textbf{Key words.} \textit{Semigroups, control systems, flag manifolds, chain control sets.}

\section{Introduction}

A right-invariant control system on a connected Lie group $G$ is the family of differential equations given by
\begin{flalign*}
&&\dot{g}(t)=X(g(t))+\sum_{j=1}^mu_i(t)Y^j(g(t)), \; \; \;u\in \UC &&\hspace{-1cm}\left(\Sigma\right)
\end{flalign*}

where $X_0, X_1, \ldots, X_m$ are right invariant vector fields and $\mathcal{U}\subset\mathbb{L}_{\infty }(\mathbb{R},U\subset \mathbb{R}^{m})$ where $U\subset\R^m$ is a compact and convex set with $0\in \inner U$. 

In particular, when $G$ is a noncompact semisimple Lie group one can study induced systems on its flag manifolds. That is, if $\Sigma$ is a right-invariant system on $G$, we have induced affine control systems on every flag manifold $\F_{\Theta}=G/P_{\Theta}$ given by
\begin{flalign*}
&&\dot{x}(t)=f^{\Theta}_{0}\left( x(t)\right)
+\sum \limits_{j=1}^{m}u_{j}(t)f^{\Theta}_{j}\left(x(t)\right) ,\;\;\;  u\in \UC&&\hspace{-1cm}\left(\Sigma_{\Theta}\right).
\end{flalign*}
where $f^{\Theta}_i=(d\pi_{\Theta})X_i$ for $i=0, \ldots, m$ and $\pi_{\Theta}:G\rightarrow \F_{\Theta}$ is the canonical projection.

Studing systems on flag manifolds is desirable for at least two reasons: First, flag manifolds includes, in particular, spheres, projective spaces and Grassmannian spaces. Secondly, right-invariant systems appears in many important applications in engineering and physics such as the orientation of rigid bodies and optimal problems (see \cite{bro} and \cite{Jurd}).

In order to understand the dynamical behavior of $\Sigma_{\Theta}$ one have to analize control and chain control sets of the system. There is a way to analize the chain control sets of $\Sigma_{\Theta}$ through the control flow $\phi^{\Theta}$ induced by the system on the fiber bundle 
\begin{equation*}
\phi^{\Theta} :\mathbb{R}\times \mathcal{U}\times \F_{\Theta}\rightarrow \mathcal{U}\times \F_{\Theta}.\;
\;
\end{equation*}%
In fact, since $\F_{\Theta}$ is a compact manifold, Theorem 4.3.11 of \cite{ck} implies that there exists a bijection between the Morse components of the control flow $\phi^{\Theta}$ and the chain control sets of the system $\Sigma_{\Theta}$. Also, by \cite{cbsm}, all the Morse sets of the flow $\phi^{\Theta}$ are given fiberwise as fixed points in $\F_{\Theta}$ of regular elements of $G$.

On the other hand, since the positive orbit $\SC:=\OC^+(1)$ is a semigroup we have the notion of control and chain control sets associated with $\SC$ (see for instance \cite{bsm}, \cite{sm} and \cite{smt}). In particular, any effective chain control set of $\SC$ contains a control set with nonempty interior.

In this paper we show that both notion actually agree, that is, the chain control sets of the induced systems $\Sigma_{\Theta}$ and the effective chain control sets associated with $\SC$ are actually the same. That implies, in particular, that any given chain control sets of $\Sigma_{\Theta}$ contains a control set with nonempty interior. Moreover, if the number of chain control sets coincides with the number of the control sets with nonempty interior then any chain control set is the closure of the control set that it contains. 

The paper is structured as follows: In Section 2 we give some preliminaries on control and Lie theory and enunciate the some results that will be needed. In Section 3 we state and prove our main result, showing that both notions of chain control sets coincides. As a consequence, we prove that if the number of chain control sets and of control sets with nonempty interior coincide, them any chain control set is the closure of the only control set with nonempty interior that it contains.

\section{Preliminaries}

In this section we introduce all the control theory concepts needed in order to present our main result.

\subsection{Control Theory}

Let $M$ be a Riemannian differentiable manifold and consider the admissible class of control $\mathcal{U\subset }$ $\mathbb{L}_{\infty }(\mathbb{R},U\subset \mathbb{R}^{m}).$ A control affine system on $M$ is the family of differential equations
\begin{equation}
\begin{tabular}{c}
$\  \dot{x}(t)=f_{0}\left( x(t)\right)
+\sum \limits_{j=1}^{m}u_{j}(t)f_{j}\left( x(t)\right) ,\  \ x(t)\in M.$%
\end{tabular}
\label{affinesystem}
\end{equation}
where $f_0, f_1, \ldots, f_m$ are smooth vector fields on $M$. The set $\mathcal{U}$ is a compact metrizable space and the
shift flow 
\begin{equation*}
\theta :\mathbb{R}\times \mathcal{U}\rightarrow \mathcal{U},\; \;
\;(t,u)\mapsto \theta _{t}u:=u(\cdot +t)
\end{equation*}%
is a continuous dynamical system, that is chain transitive (see \cite{ck}).

For $u\in \mathcal{U}$ fixed, we denote by $\varphi (\cdot ,x,u)$ the unique
solution of (\ref{affinesystem}) with $\varphi (0,x,u)=x$. If the
vector fields $f_{0},\ldots ,f_{m}$ are elements of $\mathcal{C}^{\infty }$,
then $\varphi $ has also the same class of differentiability respect to $M$
and the corresponding partial derivatives depend continuously on $(t,x,u)\in 
\mathbb{R}\times M\times \mathcal{U}$ (see Thm. 1.1 of \cite{Ka2}\}).

If we assume that all the solutions determined by $\mathcal{U}$ are defined in the whole time
axis\footnote{This condition is in general restrictive, but for the class of affine control systems that we will study it holds.}, we have the map 
\begin{equation*}
\varphi :\mathbb{R}\times M\times \mathcal{U}\rightarrow M,\; \;
\;(t,x,u)\mapsto \varphi (t,x,u),
\end{equation*}%
called the transition map of the system. The transition map together with
the shift flow determines a skew-product flow 
\begin{equation*}
\phi :\mathbb{R}\times \mathcal{U}\times M\rightarrow \mathcal{U}\times M,\;
\;(t,x,u)\mapsto \phi _{t}(u,x)=(\theta _{t}u,\varphi (t,x,u)),
\end{equation*}%
called the control flow of the system (see Sec. 4.3 of \cite{ck}).

The sets 
\begin{align*}
\mathcal{O}_{\tau }^{+}(x)& :=\left\{ \varphi (\tau ,x,u)\ :\ u\in \mathcal{%
U}\right\} , \\
\mathcal{O}_{\leq \text{ }\tau }^{+}(x)& :=\bigcup_{t\text{ }\in \text{ }%
[0,\tau ]}\mathcal{O}_{t}^{+}(x)\mbox{\quad and\quad }\mathcal{O}%
^{+}(x):=\bigcup_{\tau \text{ }>\text{ }0}\mathcal{O}_{\tau }^{+}(x).
\end{align*}%
are the set of reachable points from $x\in M$ at time $\tau > 0$, the set of reachable points from $x$ up to time $\tau $, and the {\bf positive orbit of $x$}, respectively. Analogously we define the sets $\mathcal{O}_{\tau }^{-}(x), \mathcal{O}_{\leq \text{ }\tau }^{-}(x)$ and $\mathcal{O}^{-}(x)$ to be the set of the points controllable to $x$ in time $\tau>0$, the set of points controllable to $x$ up to time $\tau$ and the {\bf negative orbit of $x$}, respectively.

The affine control system (\ref{affinesystem}) is called locally accessible at $x$ if for all $\tau >0$
the sets $\mathcal{O}_{\leq \text{ }\tau }^{+}(x)$ and $\mathcal{O}_{\leq
\tau }^{-}(x)$ have nonempty interiors. It is called {\bf locally
accessible} if it is locally accessible at every point $x\in M$.  Under the assumption of locally accessibility, it holds that $\inner\OC^+(x)$ is a dense subset of $\OC^+(x)$, for any $x\in M$ (see Lemma 1.2 of \cite{Ka2}).

\begin{definition}
A {\bf control set} of the system (\ref{affinesystem}) is a subset $D\subset M$ satisfying:
\end{definition}

\begin{enumerate}
\item[$(i)$] for each $x\in D$ there is $u\in \mathcal{U}$ with $\varphi (%
\mathbb{R}_{+},x,u)\subset D$

\item[$(ii)$] $D\subset \cl\mathcal{O}^{+}(x)$ for all $x\in D$

\item[$(iii)$] $D$ is maximal with respect to the set inclusion and
properties $(i)$ and $(ii)$.
\end{enumerate}

In \cite{ck}, Proposition 3.2.4, it is shown that a subset $D$ with nonempty interior, that is maximal with the property (ii) of the above definition is also a control set. This fact shows, in particular, that not any control set of (\ref{affinesystem}) has to have nonempty interior.

Let us now fix a metric $\varrho $ on $M$. For $x,y\in M$ and 
$\varepsilon ,\tau >0$, a {\bf controlled $(\varepsilon ,\tau )$-chain} from $x$
to $y$ is given by an integer $n\in \mathbb{N}$ and the existence of three
finite sequences 
\begin{equation*}
x_{0},\ldots ,x_{n}\in M,\text{ }u_{0},\ldots ,u_{n-1}\in \mathcal{U},\text{ 
}t_{0},\ldots ,t_{n-1}\geq \tau \text{ }
\end{equation*}%
such that 
\begin{equation*}
x=x_{0},\text{ }y=x_{n}\text{ and }\varrho (\varphi
(t_{i},x_{i},u_{i}),x_{i+1})<\varepsilon ,\text{ \ for }i=0,\ldots ,n-1.
\end{equation*}

\begin{definition}
A set $E\subset M$ is called a {\bf chain control set} of (\ref{affinesystem}) if it
satisfies the following properties:
\end{definition}

\begin{enumerate}
\item[$i)$] for each $x\in E$ there is $u\in \mathcal{U}$ with $\varphi (%
\mathbb{R},x,u)\subset E$

\item[$ii)$] For all $x,y\in E$ and $\varepsilon ,\tau >0$ there exists an $%
(\varepsilon ,\tau )$-chain from $x$ to $y$ in $M$

\item[$iii)$] $E$ is maximal with respect to the set inclusion and the
properties $(i)$ and $(ii)$ above.
\end{enumerate}

We say that a set $E$ satisfying condition (ii) in the above definition is {\bf controlled chain transitive}.

\begin{remark}
From the general theory, every chain control set of the control system (\ref{affinesystem}) is
closed, but this is not necessarely true for control sets. Moreover, every control set
with nonempty interior is contained in a chain control set if local
accessibility holds (see Sec. 4.3 of \cite{ck}).
\end{remark}

\subsection{Lie Theory and Flag Manifolds}

In order to state and prove our main result, in this section we give some
facts about semisimple Lie groups and their induced flag manifolds.

\subsubsection{Semisimple Lie Groups}

Let $G$ be a connected non-compact semisimple Lie group $G$ with finite
center and Lie algebra $\mathfrak{g}$. Fix a Cartan involution $\zeta :%
\mathfrak{g}\rightarrow \mathfrak{g}$ with associated Cartan decomposition $%
\mathfrak{g}=\mathfrak{k}\oplus \mathfrak{s}$ and let $\mathfrak{a}\subset \mathfrak{s}$ be a maximal Abelian subalgebra and $\fa^+\subset \fa$ a Weyl chamber. Let us denote by $\Pi$, $\Pi^+$ and $\Pi^+:=-\Pi^+$ the {\bf set of roots}, the {\bf set of positive roots} and {\bf set of negative roots}, respectively, associated with $\fa^+$.
The {\bf Iwasawa decomposition} of the Lie algebra $\mathfrak{g}$, associated with the above choices, reads as 
\begin{equation*}
\mathfrak{g}=\mathfrak{k}\oplus \mathfrak{a}\oplus \mathfrak{n}^{\pm }\text{
where }\mathfrak{n}^{\pm }:=\sum_{\alpha \text{ }\in \text{ }\Pi ^{\pm }}%
\mathfrak{g}_{\alpha }.
\end{equation*}
where $\fg_{\alpha}:=\{X\in\fg, \;\;[H, X]=\alpha(H)X, \;\;\mbox{ for all }\;H\in\fa\}.$

Let $K$, $A$ and $N^{\pm }$ be the connected Lie subgroups of $G$ with Lie
algebras $\mathfrak{k}$, $\mathfrak{a}$ and $\mathfrak{n}^{\pm }$,
respectively. The Iwasawa decomposition of the Lie group $G$ is given by $%
G=KAN^{\pm }$. The Weyl group of $\mathfrak{g}$ associated to $\alpha \in
\Pi $ is the finite group generated by the reflections on the hyperplane $%
\ker \alpha .$ Alternatively, the Weyl group can be obtained by the quotient 
$M^{\ast }/M$, where $M^{\ast }$ and $M$ are respectively the normalizer and
the centralizer of $\mathfrak{a}$ in $K$. 

We denote by $\Lambda \subset \Pi ^{+}$ the set of the positive roots which cannot be written as linear combinations of other positive roots. The Weyl groups coincides with the group generated by the reflections associated to $\alpha \in \Lambda $. There is only one involutive element $w_0\in\WC$ such that $w_0\Pi^+=\Pi^-$.

For $\Theta\subset\Lambda$ the parabolic subalgebra of type $\Theta$ is $\mathfrak{p}:=\fn^-(\Theta)\oplus\mathfrak{m}\oplus 
\mathfrak{a}\oplus \mathfrak{n}^{+}$ where $\mathfrak{m}$ is the Lie algebra of $M$ and $\fn^-(\Theta)$ is the sum of the eigenspaces $\fg_{\alpha}$ when $\alpha\in\langle\Theta\rangle\cap\Pi^-$, where $\langle\Theta\rangle\subset\Pi$ is the set of roots given as linear combination of the simple roots in $\Theta$. The corresponding parabolic subgroup $P_{\Theta}$ is the normalizer of $\mathfrak{p}_{\Theta}$ in $G$. The flag manifold $\mathbb{F}_{\Theta}$ is the orbit $\mathbb{F}_{\Theta}:=\Ad(G)\mathfrak{p}_{\Theta}$ on a Grasmannian manifold. It is identified with the homogeneous space $G/P_{\Theta}$. When $\Theta=\emptyset$, the sets $\fp:=\fp_{\emptyset}$ and $\F:=\F_{\emptyset}$ are called the minimal parabolic subalgebra and the maximal flag manifold, respectively.

An element of $\mathfrak{g}$ of the form $Y=\Ad(g)H$ with $g\in G$ and 
$H\in \cl\mathfrak{a}^{+}$ is called a {\bf split element}. When $%
H\in \mathfrak{a}^{+}$ we say that $Y$ is a {\bf split regular} element. The flow induced by a split element $H\in \cl\mathfrak{a}^{+}$ on $%
\mathbb{F}_{\Theta}$, is given by 
\begin{equation*}
(t,\Ad(g)\mathfrak{p})\mapsto \Ad(\mathrm{e}^{tH}g)\mathfrak{p}_{\Theta}.
\end{equation*}%
It turns out that the associated vector field is a gradient vector field
with respect to an appropriate Riemannian metric on $\mathbb{F}_{\Theta}$.

The connected components of the fixed point set of this flow are given by%
\begin{equation*}
\fix_{\Theta}(H,w)=Z_{H}\cdot wb_{\Theta}=K_{H}\cdot wb_{\Theta},\quad w\in \mathcal{W}.
\end{equation*}%
Here $b_{\Theta}$ is the origin of $\F_{\Theta}$, $Z_{H}$ is the centralizer of $H$ in $G$ and $K_{H}=Z_{H}\cap K$. The sets $\fix_{\Theta}(H, w)$ are in bijection with the double coset space $\mathcal{W}_{H}\backslash \mathcal{W}/\WC_{\Theta}$ where $\mathcal{W}_{H}$ and $\WC_{\Theta}$ are the subgroups of the
Weyl group generated, respectively, by the simple roots in $\Theta(H):=\{\alpha \in \Lambda, \;\alpha (H)=0\}$ and in $\Theta$.

Each component $\fix_{\Theta}(H, w)$ is a compact connected submanifold of $\mathbb{F}_{\Theta}$. Moreover, for $Y=\Ad(g)H$ with $H\in \cl\mathfrak{a}%
^{+}$ we get that 
\begin{equation*}
\fix_{\Theta}(Y, w)=\fix_{\Theta}(\Ad(g)H, w)=g\cdot \fix_{\Theta}(H, w).
\end{equation*}%
When $H$ is a split regular element the set $\fix_{\Theta}(H, w)$ reduces to the
point $wb_{\Theta}$ and consequently $\fix_{\Theta}(Y, w)=gwb_{\Theta}$.

\subsubsection{Semigroups}

Here we introduce the notion of control and chain control sets given by a semigroup. Let $\SC$ be a semigroup of a Lie group $G$ and $M$ a space provided with a $G$-transitive action. A $\SC$-control set on $M$ is a
subset $D\subset M$ satisfying

\begin{enumerate}
\item[$(i)$] $\inner D\neq \emptyset $

\item[$(ii)$] $D\subset \cl(\SC x)$ for all $x\in D$

\item[$(iii)$] $D$ is maximal (w.r.t.~set inclusion) with the properties $(i)$
and $(ii)$.
\end{enumerate}

A $\SC$-control set $D$ is said to be {\bf effective} if the set $D_{0}=\{x\in D,\;
\;x\in (\inner \SC)x\}$ is nonempty. The subset $D_{0}$ is said to be the 
\textbf{core} of $D$.

For semigroups with nonempty interior of a semisimple Lie group $G$ we have the following result from \cite{smt}. 

\begin{theorem}
\label{controlsets}
For any $w\in \mathcal{W}$ there is an effective $\SC$-control set $D_{\Theta}(w)\subset\F_{\Theta}$ whose core is given by
$$D_{\Theta}(w)_{0}=\left\{ \fix_{\Theta}(Y, w);\; \; \;Y\mbox{ is split regular and }\mathrm{%
e}^{Y}\in \inner \SC\right\}.$$
Moreover, $D_{\Theta}(1)$ is the only invariant $\SC$-control set, $D_{\Theta}(w_0)$ the only invariant $\SC^{-1}$-control set and any effective $\SC$-control set in $\F_{\Theta}$ is of the form $D_{\Theta}(w)$ for some $w\in\WC$.
\end{theorem}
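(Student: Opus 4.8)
The statement is the structure theorem for $\SC$-control sets on flag manifolds proved in \cite{smt}; the route I would follow is to establish everything first on the maximal flag manifold $\F=\F_\emptyset$ and then transport it to $\F_\Theta$ along the $G$-equivariant projection $\pi_\Theta:\F\to\F_\Theta$. On $\F$ the two inputs are the general theory of control sets on a compact space and the gradient dynamics of split-regular elements recalled above. Since $\F$ is compact and $\inner\SC\neq\emptyset$, the general theory yields $\SC$-control sets with nonempty interior, a unique invariant one whose closure is the unique minimal $\SC$-invariant closed subset of $\F$, and the fact that $x\in(\inner\SC)x$ for every $x$ in the core of a control set. I would also check that $\inner\SC$ contains split-regular elements: starting from any $h_0\in\inner\SC$ and using that $\inner\SC$ is an open subsemigroup together with the polar decomposition of $G$, one perturbs inside $\inner\SC$ to an element conjugate to $\rme^H$ with $H\in\fa^+$, that is, to some $h=\rme^Y\in\inner\SC$ with $Y=\Ad(g)H$ split regular.

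Fix such an $h=\rme^Y$. Its flow on $\F$ is a gradient flow whose fixed points are the single points $gwb_\emptyset$, $w\in\WC$, with attractor $gb_\emptyset$ and repeller $gw_0b_\emptyset$, and whose stable and unstable manifolds are the translated Bruhat cells. Since $h^nx\to gb_\emptyset$ for $x$ in an open dense set and $h\in\inner\SC$, the attractor satisfies $gb_\emptyset\in(\inner\SC)(gb_\emptyset)$, so it lies in the core of a control set, which must be the invariant one because $h^n$ pulls points toward $gb_\emptyset$; I call it $D_\emptyset(1)$. More generally every $gwb_\emptyset$ is fixed by $\rme^Y\in\inner\SC$, hence lies in the core of some $\SC$-control set $D_\emptyset(w)$, and one shows --- by comparing two split-regular interior elements and analyzing how elements of $\SC$ carry points across Bruhat cells of different Weyl type --- that this control set depends only on $w$ and $\SC$, not on the chosen $h$. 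Conversely, if $x$ lies in the core of an effective control set, pick $s\in\inner\SC$ with $sx=x$; since $\inner\SC\cap P_x$ is open and nonempty in the isotropy parabolic $P_x$ and the split-regular elements are dense there, there is a split-regular $\rme^{Y'}\in\inner\SC$ having $x$ among its fixed points, so $x=\fix_\emptyset(Y',w)$ for some $w\in\WC$. Assembling these facts, the core of $D_\emptyset(w)$ is exactly $\{\fix_\emptyset(Y',w)\ :\ Y'\text{ split regular},\ \rme^{Y'}\in\inner\SC\}$ and every effective $\SC$-control set in $\F$ equals some $D_\emptyset(w)$. Uniqueness of the invariant control set follows from uniqueness of the minimal set, and running the same argument for $\SC^{-1}$ --- whose split-regular interior elements are the same but with time reversed, so $gb_\emptyset$ and $gw_0b_\emptyset$ exchange roles --- identifies $D_\emptyset(w_0)$ as the unique $\SC^{-1}$-invariant control set.

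To descend to a general $\F_\Theta$, I would use that $\pi_\Theta:\F\to\F_\Theta$ is $G$-equivariant, so it maps $\SC$-control sets onto $\SC$-control sets and cores onto cores, and that $\pi_\Theta(gwb_\emptyset)=gwb_\Theta=\fix_\Theta(Y,w)$ by the identity $\fix_\Theta(\Ad(g)H,w)=g\cdot\fix_\Theta(H,w)$ together with $\fix_\Theta(H,w)=\{wb_\Theta\}$ for split-regular $H$. Defining $D_\Theta(w):=\pi_\Theta(D_\emptyset(w))$ then transports the core description to the stated formula; an effective $\SC$-control set in $\F_\Theta$ pulls back, via preimages of its core points, to an effective $\SC$-control set in $\F$, hence is one of the $D_\Theta(w)$, and it causes no trouble that distinct $w$ may produce the same set. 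Finally $G$-equivariance of $\pi_\Theta$ carries the invariance statements for $D_\emptyset(1)$ and $D_\emptyset(w_0)$ down to $D_\Theta(1)$ and $D_\Theta(w_0)$.

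The step I expect to be the main obstacle is the middle part of the second paragraph: showing that the core of the $\SC$-control set through $gwb_\emptyset$ is precisely $\{\fix_\emptyset(Y',w)\ :\ Y'\text{ split regular},\ \rme^{Y'}\in\inner\SC\}$ and, in particular, that this is a single control set independent of the split-regular element chosen. This is a genuine dynamical statement about how $\SC$ moves points across the Bruhat cells --- essentially a ``no-return'' property between cells of different Weyl type --- and it is where the bulk of the work in \cite{smt} is concentrated; by contrast the surrounding reductions, namely producing split-regular elements in $\inner\SC$ and the equivariant projection from $\F$ to $\F_\Theta$, are comparatively routine once the gradient-flow picture on $\F$ is in hand.
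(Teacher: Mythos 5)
First, a point of comparison: the paper itself gives no proof of this statement --- it is imported verbatim from \cite{smt} (``we have the following result from \cite{smt}''), so your proposal can only be measured against the argument of that reference, not against anything in the present text. Your outline does follow the strategy of \cite{smt}: work first on the maximal flag $\F$, realize cores as the fixed points $gwb_{\emptyset}$ of split-regular elements $\rme^{Y}\in\inner\SC$, and then descend to $\F_{\Theta}$ through the equivariant projection $\pi_{\Theta}$, using that $\fix_{\Theta}(Y,w)$ is the single point $gwb_{\Theta}$ when $Y$ is split regular.

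There are, however, two genuine gaps. The central one you acknowledge yourself: the well-definedness of $D_{\emptyset}(w)$ (independence of the chosen regular element of $\inner\SC$), the identification of its core with exactly the stated set of fixed points, and the fact that every effective $\SC$-control set arises this way --- the ``no-return across Bruhat cells'' property --- is precisely the content of the theorem, and your proposal defers it wholesale to \cite{smt}; as written it is an outline of the statement, not a proof. Second, the step you call routine is argued incorrectly: producing split-regular elements in $\inner\SC$ does not follow from openness together with the polar decomposition, because the set of elements conjugate to $\exp\fa^{+}$ (the $\R$-regular elements) is open but \emph{not} dense in $G$; in $\Sl(2,\R)$ the elliptic elements form a nonempty open set containing no such conjugates, so ``perturbing $h_{0}$ inside $\inner\SC$'' need not reach one. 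The semigroup property has to enter essentially --- in \cite{smt} and San Martin's earlier work this is done via the invariant control sets of $\SC$ and $\SC^{-1}$, a transversal pair of attractor/repeller flags chosen in their interiors, and a contraction argument --- and without it even the starting point of your second paragraph is unsupported. (The final projection step, that $\pi_{\Theta}$ carries the $D_{\emptyset}(w)$ and their cores onto the effective control sets of $\F_{\Theta}$, also corresponds to a separate theorem in \cite{smt}, though the lifting argument you sketch via fixed points in the fiber is essentially the right one.)
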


It follows from the above that there exists a unique $\Theta(\SC)\subset\Lambda$ such that the set $\{w\in \mathcal{W}: D(w)=D(1)\}\subset\WC$ coincides with the subgroup $\mathcal{W}_{\Theta(\SC)}$ of $\WC$. Moreover,  
\begin{equation*}
\mathcal{W}_{\Theta(\SC)}w_{1}\WC_{\Theta}=\mathcal{W}_{\Theta(\SC)}w_{2}\WC_{\Theta}\Leftrightarrow D_{\Theta}(w_{1})=D_{\Theta}(w_{2})
\end{equation*}
and the number of effective control sets in $\mathbb{F}_{\Theta}$ is equal to the number of elements in the coset space $\mathcal{W}_{\Theta(S)}\setminus \mathcal{W}/\WC_{\Theta}$. The subset $\Theta(S)$ is called the {\bf flag type of the semigroup $\SC$}.

We also have a concept of chain control sets associated with a semigroup as follows: Let $\mathcal{F}$ be a family of subsets of $\SC$. For $x,y\in M$, $\varepsilon >0$ and $A\in \mathcal{F}$ a $(\SC,\varepsilon ,A)$\textbf{-chain}
from $x$ to $y$ is given by an integer $n\in \mathbb{N}$, $x_{0},\ldots, x_{n}\in M$, $g_{0},\ldots ,g_{n-1}\in A$ such that $x_{0}=x$, $x_{n}=y$
and 
$$\varrho (g_{i}x_{i},x_{i+1})<\varepsilon,  \;\;\mbox{ for }\;\; i=0,\ldots ,n-1.$$ 
A subset $E_{\FC}\subset M$ is called a $\mathcal{F}$-\textbf{chain control set} of $\SC$ if it satisfies

\begin{enumerate}
\item[$(i)$] $\inner E_{\FC}\neq \emptyset $.

\item[$(ii)$] For all $x,y\in E_{\FC}$, $\varepsilon >0$ and $A\in \mathcal{F}$
there exists an $(\SC,\varepsilon ,A)$-chain from $x$ to $y$.

\item[$(iii)$] $E_{\FC}$ is maximal (w.r.t.~set inclusion) with the properties $(i)$
and $(ii)$.
\end{enumerate}

A $\mathcal{F}$-chain control set is said to be \textbf{effective} if it contains an effective $\SC$-control set.

When $G$ is a noncompact semisimple Lie group with finite center, we have the following refined result (see \cite{bsm}, Proposition 4.7).

\begin{proposition}
\label{chaincontrolsets}
For each $w\in \mathcal{W}$ there exists an effective $\mathcal{F}$%
-chain control set $E_{\FC, \Theta}(w)$ for $\SC$ on $\mathbb{F}_{\Theta}$ such that $D_{\Theta}(w)\subset E_{\FC, \Theta}(w)$.

Moreover, the set $\mathcal{W}_{\mathcal{F}}(\SC)=\{w\in \mathcal{W}:E_{\FC}(w)=E_{\FC}(1)\}$ is a
subgroup of $\mathcal{W}$ and 
\begin{equation*}
\mathcal{W}_{\mathcal{F}}(\SC)w_{1}\WC_{\Theta}=\mathcal{W}_{\mathcal{F}%
}(\SC)w_{2}\WC_{\Theta}\Leftrightarrow E_{\FC, \Theta}(w_{1})=E_{\FC, \Theta}(w_{2}).
\end{equation*}
\end{proposition}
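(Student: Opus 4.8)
The plan is to produce the chain control sets $E_{\FC,\Theta}(w)$ by projecting down from the maximal flag manifold $\F=\F_\emptyset$, where the sets $E_{\FC}(w):=E_{\FC,\emptyset}(w)$ are obtained from a semigroup argument, and then to exploit the fact that each $E_{\FC,\Theta}(w)$ must, by the abstract theory, contain a control set with nonempty interior, hence some $D_\Theta(w')$. First I would recall (from \cite{bsm}) that on the maximal flag $\F$ each $\FC$-chain control set is an \emph{$\FC$-invariant} set in the sense that it is a union of fixed point components of split regular elements $\rme^Y$ with $Y$ near an appropriate subalgebra, and that $D(w)\subset E_{\FC}(w)$ with the relation $\WC_\FC(\SC)w_1=\WC_\FC(\SC)w_2\Leftrightarrow E_\FC(w_1)=E_\FC(w_2)$, where $\WC_\FC(\SC)=\WC_{\Theta_\FC(\SC)}$ for a well-defined subset $\Theta_\FC(\SC)\subset\Lambda$ (the ``$\FC$-flag type''). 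This is the base case of the argument, and I would treat it as given by the cited Proposition 4.7 of \cite{bsm}.

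Next I would push forward along $\pi_\Theta:\F\to\F_\Theta$. The key observations are: (a) $\pi_\Theta$ is a fiber bundle with compact fibers and is equivariant for the $G$-action, so it maps $(\SC,\ep,A)$-chains to $(\SC,\ep',A)$-chains (after adjusting $\ep$ by uniform continuity of $\pi_\Theta$), and conversely a chain downstairs lifts, fiber by fiber, to an approximate chain upstairs; (b) $\pi_\Theta(D(w))=D_\Theta(w)$ by Theorem \ref{controlsets} together with the description of cores via $\fix_\Theta(Y,w)=\pi_\Theta(\fix_\emptyset(Y,w))$. Hence I would \emph{define} $E_{\FC,\Theta}(w):=$ the $\FC$-chain control set of $\SC$ on $\F_\Theta$ containing $\pi_\Theta(E_\FC(w))$; one must check this is well-defined, i.e. that $\pi_\Theta(E_\FC(w))$ is controlled-chain-transitive in $\F_\Theta$ (immediate from (a)) and that the maximal such set containing it is unique (standard: controlled chain transitivity is an equivalence-type relation on points admitting chains in both directions, so maximal controlled-chain-transitive sets through a given point are unique once one such set has nonempty interior, which holds because $D_\Theta(w)\subset\pi_\Theta(E_\FC(w))$ has nonempty interior by Theorem \ref{controlsets}). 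This simultaneously gives $D_\Theta(w)\subset E_{\FC,\Theta}(w)$ and effectiveness.

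It remains to identify $\WC_\FC(\SC)w_1\WC_\Theta=\WC_\FC(\SC)w_2\WC_\Theta\Leftrightarrow E_{\FC,\Theta}(w_1)=E_{\FC,\Theta}(w_2)$. For ``$\Rightarrow$'': if $w_2\in\WC_\FC(\SC)w_1\WC_\Theta$, write $w_2=\sigma w_1\tau$ with $\sigma\in\WC_\FC(\SC)$, $\tau\in\WC_\Theta$; then $E_\FC(\sigma w_1)=E_\FC(w_1)$ upstairs, and the right factor $\tau\in\WC_\Theta$ fixes the $P_\Theta$-coset, so $\pi_\Theta(w_1b_\emptyset)=\pi_\Theta(w_1\tau b_\emptyset)$, giving $\pi_\Theta(E_\FC(w_1))$ and $\pi_\Theta(E_\FC(w_2))$ the same image and hence the same enveloping chain control set. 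For ``$\Leftarrow$'': if $E_{\FC,\Theta}(w_1)=E_{\FC,\Theta}(w_2)$, then the cores of the control sets they contain coincide, so $D_\Theta(w_1)=D_\Theta(w_2)$, and by the displayed equivalence following Theorem \ref{controlsets} this forces $\WC_{\Theta(\SC)}w_1\WC_\Theta=\WC_{\Theta(\SC)}w_2\WC_\Theta$; then one upgrades $\WC_{\Theta(\SC)}$ to $\WC_\FC(\SC)$ using that $\WC_{\Theta(\SC)}\subset\WC_\FC(\SC)$ (a control set and the chain control set containing it cannot separate Weyl elements more finely) — more precisely one argues directly that $E_{\FC,\Theta}(w_1)=E_{\FC,\Theta}(w_2)$ pulls back, via lifting chains through $\pi_\Theta$, to $E_\FC(w_1)=E_\FC(w_1\tau)$ for some $\tau\in\WC_\Theta$, whence $w_1^{-1}\WC_\FC(\SC)w_2\cap\WC_\Theta\neq\emptyset$. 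Finally, $\WC_\FC(\SC)=\{w:E_\FC(w)=E_\FC(1)\}$ being a subgroup of $\WC$ is inherited verbatim from \cite{bsm}, and the subgroup property of $\WC_\FC(\SC)$ together with the coset equivalence gives that $\WC_\FC(\SC)\backslash\WC/\WC_\Theta$ indexes the $E_{\FC,\Theta}(w)$.

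The main obstacle I expect is the ``$\Leftarrow$'' direction of the coset equivalence, specifically showing that equality of chain control sets downstairs is not strictly coarser than the pulled-back relation upstairs — i.e. that $\pi_\Theta$ does not accidentally merge chain control sets that the $\WC_\FC(\SC)$-double-coset relation keeps apart. The clean way around this is to avoid pulling back chains and instead characterize $E_{\FC,\Theta}(w)$ intrinsically as the union of those fixed-point components $\fix_\Theta(Y,w')$ (for $Y$ with $\rme^Y$ split regular in a suitable closure) on which the gradient-like flows force chain recurrence — this is exactly the description available on $\F$ from \cite{cbsm} and \cite{bsm}, transported to $\F_\Theta$ via $\fix_\Theta(Y,w')=\pi_\Theta\fix_\emptyset(Y,w')$ — and then read off both inclusions of the coset equivalence from the combinatorics of $\WC_\FC(\SC)\backslash\WC/\WC_\Theta$ directly. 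I would also need to be slightly careful that ``effective'' in the $\FC$-chain sense (contains an effective $\SC$-control set) is preserved, but this follows since $\pi_\Theta$ maps $D(w)_0$ onto $D_\Theta(w)_0\neq\emptyset$.
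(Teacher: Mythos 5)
The paper itself offers no proof of this proposition: it is quoted verbatim as Proposition 4.7 of \cite{bsm}, so your proposal amounts to a sketch of how to deduce the general-$\Theta$ statement from the maximal-flag case of that same citation. Under that reading, the existence part is essentially fine: $\pi_\Theta$ is equivariant and uniformly continuous, so it maps $(\SC,\ep,A)$-chains to chains, $\pi_\Theta(D(w))=D_\Theta(w)$ supplies nonempty interior and effectiveness, and the enveloping maximal controlled-chain-transitive set through $\pi_\Theta(E_\FC(w))$ is unique, so $E_{\FC,\Theta}(w)$ is well defined and contains $D_\Theta(w)$. The ``$\Rightarrow$'' half of the double-coset equivalence also goes through once you note that $\pi_\Theta(E_\FC(w_1))$ and $\pi_\Theta(E_\FC(\sigma w_1\tau))$ both contain the common control set $D_\Theta(w_1)=D_\Theta(w_1\tau)$, hence lie in one chain control set.

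The genuine gap is the ``$\Leftarrow$'' direction. Your first argument --- that $E_{\FC,\Theta}(w_1)=E_{\FC,\Theta}(w_2)$ forces $D_\Theta(w_1)=D_\Theta(w_2)$ because ``the cores of the control sets they contain coincide'' --- is false: an effective $\FC$-chain control set may contain several distinct effective $\SC$-control sets, since control sets are indexed by $\WC_{\Theta(\SC)}\backslash\WC/\WC_\Theta$ while chain control sets are indexed by $\WC_{\FC}(\SC)\backslash\WC/\WC_\Theta$, and the inclusion $\WC_{\Theta(\SC)}\subset\WC_{\FC}(\SC)$ can be strict (this possible strictness is exactly why the paper's final theorem needs the hypothesis $\Theta(\SC)=\Theta(\phi)$). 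Your fallback --- lifting chains through $\pi_\Theta$, or invoking an ``intrinsic'' description of $E_{\FC,\Theta}(w)$ as a union of fixed-point components $\fix_\Theta(Y,w')$ --- is precisely the nontrivial content of the references being cited: an $\ep$-jump in $\F_\Theta$ can sit over an arbitrary displacement inside the compact fiber of $\pi_\Theta$, so chains do not lift without controlling the fiber dynamics (this is what the machinery of \cite{bsm} and \cite{cbsm} provides), and the fixed-point characterization of the chain control sets on $\F_\Theta$ is essentially equivalent to the statement you are trying to prove. As written, the proposal closes only the easy half of the equivalence and reduces the hard half to the very results it would need to cite anyway.
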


The unique subset $\Theta(\FC)\subset\Lambda$ such that $\WC_{\Theta(\FC)}=\mathcal{W}_{\mathcal{F}}(S)$ is called the $\mathcal{F}$-{\bf flag
type of the semigroup $\SC$}. The number of chain control sets in $\mathbb{F}_{\Theta}$
is equal to the number of elements in the coset space $\mathcal{W}_{\mathcal{F}}(S)\setminus \mathcal{W}/\WC_{\Theta}$. When there is no change of confusion, we denote only by $E_{\Theta}(w)$ the above $\FC$-chain control sets on $\F_{\Theta}$.

By Proposition \ref{chaincontrolsets} above, we have that $\Theta(\SC)\subset\Theta(\FC)$ for any family of subsets $\FC$ of $\SC$. 

\begin{remark}
The above definitions of $\SC$-control sets and $\mathcal{F}$-chain control sets differs from their analogous ones for control systems, mainly by the assumption that both, $\SC$-control sets and $\mathcal{F}$-chain control sets, have nonempty interior which is not necessarily true for control systems.
\end{remark}

\subsubsection{Flow on flag bundles}

Let $X$ be a compact metric space and $\phi :\mathbb{R}\times X\rightarrow
X,\;(t,x)\mapsto \phi _{t}(x)$ be a continuous flow. A compact subset $%
C\subset X$ is called isolated invariant if $\phi _{t}(C)\subset C$ for all $%
t\in \mathbb{R}$ and if there is a neighborhood $V$ of $C$ that satisfy 
\begin{equation*}
\phi _{t}(x)\in V\; \; \mbox{ for all }\; \;t\in \mathbb{R}\; \; \Rightarrow
\; \;x\in C.
\end{equation*}%
A {\bf Morse decomposition} of $\phi $ is a finite collection $\{ \mathcal{M}%
_{1},\ldots ,\mathcal{M}_{n}\}$ of nonempty pairwise disjoint isolated
invariant compact sets satisfying

\begin{itemize}
\item[(A)] for all $x\in X$ the $\omega^*$ and $\omega$-limit sets $%
\omega^*(x)$ and $\omega(x)$, respectively, are contained in $\bigcup_{i=1}^n%
\mathcal{M}_i$.

\item[(B)] if there are $\mathcal{M}_{j_0}, \ldots, \mathcal{M}_{j_l}$ and $%
x_1, \ldots, x_l\in X\backslash \bigcup_{i=1}^n\mathcal{M}_i$ with 
\begin{equation*}
\omega^*(x_i)\subset \mathcal{M}_{j_{i-1}}\; \; \; \mbox{ and }\; \; \;
\omega(x_i)\subset \mathcal{M}_{j_i}
\end{equation*}
for $i=1, \ldots, l$ then $\mathcal{M}_{j_0}\neq \mathcal{M}_{j_l}$.
\end{itemize}

The elements in a Morse decomposition are called {\bf Morse sets}.

Let $G$ be a connected semisimple noncompact Lie group with finite center and $\pi :Q\rightarrow X$ a $G$-principal bundle, where $X$ is a compact
metric space. The Lie group $G$ acts continuously on from the right on $Q$, this action preserves the fibers, and is free and transitive on each fiber.
The \textbf{flag bundle} $\mathbb{E}_{\Theta}=Q\times _{G}\mathbb{F}_{\Theta}$ with typical fiber $\mathbb{F}_{\Theta}$ is given by $(Q\times \mathbb{F}_{\Theta})/\! \sim $, where $(q_{1},b_{1})\sim (q_{2},b_{2})$ iff there exists $g\in G$ with $q_{1}=q_{2}\cdot g$ and $b_{1}=g^{-1}\cdot b_{2}$.

Now let $\phi_t:Q\rightarrow Q$ be a flow of automorphisms, i.e., $\phi_t(q\cdot g)=\phi_t(q)\cdot g$, and assume that the induced flow on $X$
is chain transitive. For the induce flow $\phi^{\Theta}_t:\mathbb{E}_{\Theta}\rightarrow%
\mathbb{E}_{\Theta}$ we have the following result (see Thm. 9.11 of \cite{cbsm}, Thm. 5.2 of \cite{smls}).

\begin{theorem}
\label{morsesets} There exist $H_{\phi}\in \cl(\mathfrak{a}^+)$ and a
continuous $\phi$-invariant map 
\begin{equation*}
h:Q\rightarrow \Ad(G)H_{\phi}, \; \; \;h(\phi_t(q))=h(q),
\end{equation*}
satisfying $h(q\cdot g)=\Ad(g^{-1})h(q), \;q\in Q, \;g\in G$ and such that the induced flow on $\mathbb{E}_{\Theta}$ admits a finest Morse decomposition whose elements are given fiberwise by 
\begin{equation*}
\mathcal{M}_{\Theta}(w)_{\pi(q)}=q\cdot \fix_{\Theta}(h(q), w).
\end{equation*}
\end{theorem}

The subset $\Theta(\phi)=\Theta(H_{\phi })$ is called the {\bf flag type of the flow $\phi$}. The number of the Morse sets in $\mathbb{E}_{\Theta}$ is equal to the number of elements in the coset space $\mathcal{W}_{\Theta(\phi)}\backslash \mathcal{W}/\WC_{\Theta}$.

\section{The main result}

Here we show that for induced control systems on flag manifolds, both concepts of control and chain control sets above coincide implying that one could analize the behaviour of invariant control systems via semigroup theory. In particular, the main theorem implies that any chain control sets of a locally accessible induce system contains a control set with nonempty interior, which is in general not necessarely true (see Example 3.4.2 of \cite{ck}). Moreover, when $\Theta(\SC)=\Theta(\phi)$ every chain control set is the closure of the only control set with nonempty interior that it contains.

Let $G$ be a noncompact connected semisimple Lie group with finite center and consider the right-invariant system $\Sigma$ on $G$. Moreover, assume that $\Sigma$ is a locally accessible right-invariant system on $G$, and therefore, that all the induced system $\Sigma_{\Theta}$ are also locally accessible. From here on $\SC$ will stand for the semigroup given by $\OC^+(1)$, which by the local accessibility condition has nonempty interior in $G$.

\begin{proposition}
\label{control}
A subset $D\subset\F_{\Theta}$ is an effective $\SC$-control set if and only if it is a control set of $\Sigma_{\Theta}$ with nonempty interior.
\end{proposition}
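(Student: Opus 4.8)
The plan is to compare the two notions directly through the orbit structure, using the fact that the semigroup $\SC=\OC^+(1)$ with nonempty interior governs reachability for the induced system $\Sigma_\Theta$. The starting observation is that, since $\pi_\Theta:G\to\F_\Theta$ is equivariant and the system is right-invariant, the positive orbit of a point $x=g\cdot b_\Theta\in\F_\Theta$ under $\Sigma_\Theta$ is exactly $\OC^+(x)=\SC\cdot x$; in particular $\cl\OC^+(x)=\cl(\SC x)$. This identifies condition (ii) in the definition of a control set for $\Sigma_\Theta$ with condition (ii) in the definition of an $\SC$-control set. Since both definitions include the corresponding maximality clause, the two notions will coincide once we match up, on the one hand, the standing hypotheses, and on the other, the conditions (i).

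First I would prove the ``if'' direction. Let $D$ be a control set of $\Sigma_\Theta$ with $\inner D\neq\emptyset$. Then property (ii) gives $D\subset\cl\OC^+(x)=\cl(\SC x)$ for every $x\in D$, so $D$ satisfies (i)' and (ii)' of the $\SC$-control set definition; since any $\SC$-control set is in particular a set with the control-set property (ii) of $\Sigma_\Theta$ together with nonempty interior, maximality of $D$ forces it to be a maximal such set, hence an $\SC$-control set. It remains to show $D$ is \emph{effective}, i.e. that $D_0=\{x\in D:\ x\in(\inner\SC)x\}$ is nonempty. Here I would invoke local accessibility: by Lemma~1.2 of \cite{Ka2}, $\inner\OC^+(x)$ is dense in $\OC^+(x)$, and $\inner\OC^+(x)=(\inner\SC)x$ because $\SC$ has nonempty interior and the action is transitive and open on the relevant set. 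Picking $x_0\in\inner D$ and using that $D\subset\cl\OC^+(x)$ for all $x\in D$, a standard argument (as in \cite{ck}, Sec.~3.2, for control sets with nonempty interior) shows that some point of $\inner D$ lies in $(\inner\SC)x$ for itself, so $D_0\neq\emptyset$.

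For the ``only if'' direction, let $D$ be an effective $\SC$-control set. Then $\inner D\neq\emptyset$ and $D\subset\cl(\SC x)=\cl\OC^+(x)$ for all $x\in D$, so $D$ has nonempty interior and satisfies property (ii) of the control-set definition for $\Sigma_\Theta$. By Proposition~3.2.4 of \cite{ck}, a subset with nonempty interior that is maximal with property (ii) is automatically a control set of $\Sigma_\Theta$; and maximality of $D$ among sets satisfying (ii) follows from maximality of $D$ as an $\SC$-control set together with the fact that any control set of $\Sigma_\Theta$ containing $D$ would still be contained in $\cl\OC^+(x)=\cl(\SC x)$ and have nonempty interior. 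The condition (i) of the control-set definition — existence of $u\in\UC$ with $\varphi(\R_+,x,u)\subset D$ — then comes for free for control sets with nonempty interior by the cited \cite{ck} result (it is part of the conclusion of Proposition~3.2.4), so no separate verification is needed; effectiveness of $D$ is used precisely to guarantee $\inner D\neq\emptyset$ is compatible with the core being nonempty, but for the control-set conclusion only $\inner D\neq\emptyset$ is required.

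The main obstacle I anticipate is the careful identification $\inner\OC^+_{\Theta}(x)=(\inner\SC)\cdot x$ and the passage between ``$x$ in the core of an $\SC$-control set'' and ``property (i) of a control set with the solution staying inside $D$''. This requires the openness of the action map $g\mapsto g\cdot x$ on $\inner\SC$ (which holds since $\F_\Theta=G/P_\Theta$ is a homogeneous space and $\inner\SC$ is open in $G$), together with the density statement from local accessibility; once these are in place, the equivalence of the two maximality clauses is essentially formal because both refer to the same inclusion-ordered family of subsets of $\F_\Theta$ satisfying the same reachability condition (ii).
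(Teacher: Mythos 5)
Your proposal is correct and takes essentially the same route as the paper: identify $\OC^+(x)=\SC\,x$ via right-invariance, use Proposition 3.2.4 of \cite{ck} to match the two maximality clauses, and derive effectiveness from local accessibility together with the density of $\inner\SC$ in $\SC$ and property (ii). The only difference is cosmetic: where you defer to a ``standard argument'' from Sec.~3.2 of \cite{ck}, the paper spells out the same density step (for $x\in\inner D$ one picks $g\in\inner\SC$ with $g^{-1}x\in D\subset\cl\left((\inner\SC)x\right)$, and since $(\inner\SC)^{-1}x$ is an open neighborhood of $g^{-1}x$ it meets $(\inner\SC)x$, giving $x\in(\inner\SC)x$).
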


\begin{proof}
By Proposition 3.2.4 of \cite{ck} and the invariance of $\Sigma$ we have that any effective $\SC$-control set in $\F_{\Theta}$ is in fact a control set of $\Sigma_{\Theta}$ with nonempty interior. 

Reciprocally, if $D\subset\F_{\Theta}$ is a control set with nonempty interior of $\Sigma_{\Theta}$, we have by invariance that $D$ is a $\SC$-control set and we only have to show that it is effective. 

By the local accessibility assumption, we have that $\inner D$ is dense in $D$ (see Lemma 3.2.13 of \cite{ck}). Moreover, since $\inner\SC$ is dense in $\SC$ and $1\in \SC$ we have that 
$$\left(\inner\SC\right)^{-1}\cdot x\cap D\neq\emptyset, \;\;\mbox{ for any }\;\;x\in \inner D.$$
Therefore, by property (ii) in the definition of control sets, $\left(\inner\SC\right)x\cap\left(\inner\SC\right)x\neq\emptyset$ for any $x\in\inner D$ which implies that $D$ is in fact effective concluding the proof.   
\end{proof}

\begin{remark}
We should note that the above result does not use the fact that the Lie group $G$ is semisimple, which implies that it remains true for an induced system on an arbitrary homogeneous space. 
\end{remark}

Now we turns out to the problem of the chain control sets of $\Sigma_{\Theta}$. Consider the $G$-principal bundle $\pi:\mathcal{U}\times G\rightarrow \mathcal{U}$, where $\pi$ is the projection on the first factor and $G$ acts from the right on $\mathcal{U}\times G$, trivially, as $(u, g)\cdot h=(u, gh) $. The invariance of $\Sigma$ implies that $\varphi(t, gh, u)=\varphi(t, g, u)h$ which implies that the control flow $\phi_t$ is a flow of automorphisms on $\UC\times G$. Moreover, the induced flow on $\mathcal{U}$ is the shift flow, that is chain transitive. Therefore, Theorem \ref{morsesets} can be applied in order to obtain the Morse sets of the induced control flows $\phi^{\Theta}$ for any $\Theta\subset\Lambda$.

Let $\mathcal{F}$ be the family of subset of $\SC$ given by 
\begin{equation*}
\mathcal{F}:=\left \{ \bigcup_{t>\tau }\mathcal{O}%
_{t}^{+}(e),\; \; \tau > 0\right \} .
\end{equation*}

Now we prove our main result.

\begin{theorem}
\label{chaincontrol} A subset $E\subset\F_{\Theta}$ is a $\FC$-chain control set if and only if it is a chain control set of $\Sigma_{\Theta}$.
\end{theorem}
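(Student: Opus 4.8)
The plan is to identify both families of sets via the common intermediate object provided by Theorem \ref{morsesets}, namely the finest Morse decomposition of the control flow $\phi^{\Theta}$ on the flag bundle $\EC_\Theta = (\UC\times G)\times_G \F_\Theta \cong \UC\times\F_\Theta$. On one side, Theorem 4.3.11 of \cite{ck} gives a bijection between the Morse components of $\phi^{\Theta}$ and the chain control sets of $\Sigma_\Theta$: if $\MC$ is a Morse set then $E = \{x\in\F_\Theta : (u,x)\in\MC \text{ for some } u\in\UC\}$ is a chain control set, and conversely every chain control set arises this way as the projection of its lift. On the other side, I want to show that these same projected sets are exactly the $\FC$-chain control sets $E_\Theta(w)$ of the semigroup $\SC$, using the fiberwise description $\MC_\Theta(w)_{u} = \fix_\Theta(h(u,e),w)$ from Theorem \ref{morsesets} together with the explicit description of the $E_\Theta(w)$ coming from Proposition \ref{chaincontrolsets}.

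First I would pin down the relation between the invariant map $h:\UC\times G\to \Ad(G)H_\phi$ and the semigroup $\SC$. Because $\phi_t$ is the control flow and $h$ is $\phi$-invariant with $h((u,e)\cdot g) = \Ad(g^{-1})h(u,e)$, the value $h(u,e)$ records the asymptotic exponential behaviour of trajectories controlled by $u$; the key point is that the split element $H_\phi$ satisfies $\Theta(\phi) = \Theta(H_\phi)$, and I expect $\Theta(\phi)=\Theta(\FC)$ — i.e. the flag type of the control flow equals the $\FC$-flag type of $\SC$ for the chosen family $\FC = \{\bigcup_{t>\tau}\OC_t^+(e) : \tau>0\}$. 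This is the heart of the matter. The way to establish it is to compare the recurrence structure: an $(\SC,\ep,A)$-chain with $A = \bigcup_{t>\tau}\OC_t^+(e)$ is, after unwinding the right-invariance, exactly a controlled $(\ep,\tau)$-chain in $\F_\Theta$ started from the corresponding point, because $g_i = \varphi(t_i, e, u_i)$ with $t_i>\tau$ acts on $x_i$ as $g_i x_i = \varphi(t_i, e, u_i)x_i$ and by invariance $\varphi(t_i,e,u_i)x_i$ is the time-$t_i$ value of the $\Sigma_\Theta$-trajectory from $x_i$ under $u_i$ (after translating to the fibre). Thus the notion of "$\FC$-chain" and "controlled chain" coincide literally on $\F_\Theta$, and hence the maximal $\FC$-chain transitive sets coincide with the maximal controlled chain transitive sets, up to the interior-versus-nonempty technicality.

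Next I would handle the two definitional mismatches flagged in the Remarks. A $\FC$-chain control set is required to have nonempty interior, whereas a chain control set of $\Sigma_\Theta$ need not; and the $\SC$-notion asks for chains using elements of a prescribed $A\in\FC$ while the control-system notion fixes $\tau$ directly. The first is resolved by invoking Proposition \ref{control}: each $\FC$-chain control set $E_\Theta(w)$ contains the effective $\SC$-control set $D_\Theta(w)$, which by Proposition \ref{control} is a control set of $\Sigma_\Theta$ with nonempty interior, and by the Remark after Definition~2.7 this control set is contained in a (unique) chain control set of $\Sigma_\Theta$; conversely, a chain control set of $\Sigma_\Theta$, being the projection of a Morse set $\MC_\Theta(w)$, contains the point $wb_\Theta$ translated by the split-regular fixed points and hence contains $D_\Theta(w)_0$, giving nonempty interior. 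So every chain control set of $\Sigma_\Theta$ is effective in the semigroup sense, and the two collections are in bijection through the common index set $\WC_{\Theta(\phi)}\backslash\WC/\WC_\Theta = \WC_{\FC}(\SC)\backslash\WC/\WC_\Theta$.

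Finally I would assemble the argument: given a $\FC$-chain control set $E$, it is $\FC$-chain transitive; by the literal identification of chains above it is controlled chain transitive, and by maximality (using that it is projection of a Morse set, which is a maximal chain-transitive-lift) it is a chain control set of $\Sigma_\Theta$. Conversely a chain control set $E$ of $\Sigma_\Theta$ is controlled chain transitive, hence $\FC$-chain transitive, hence contained in some $E_\Theta(w)$; that it equals $E_\Theta(w)$ follows because both have the same lift to the same Morse set $\MC_\Theta(w)$ of $\phi^{\Theta}$, by the two bijections with the Morse decomposition. The main obstacle I anticipate is precisely the first step of the second paragraph — verifying cleanly that $H_\phi$ (equivalently $\Theta(\phi)$) is the split element attached to $\SC$, i.e. that the Morse-theoretic flag type of the control flow coincides with the semigroup $\FC$-flag type $\Theta(\FC)$ — since this requires matching the asymptotic/recurrence data of $h$ with the structure of $\SC = \OC^+(e)$ and of the particular family $\FC$, rather than any formal manipulation; once that identification is in hand, everything else is bookkeeping on double cosets and the soft facts from \cite{ck}.
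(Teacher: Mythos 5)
Your reduction of both notions to the finest Morse decomposition of $\phi^{\Theta}$, and your observation that, by right invariance, an $(\SC,\varepsilon,A)$-chain with $A=\bigcup_{t>\tau}\OC_t^+(e)$ is literally a controlled $(\varepsilon,\tau)$-chain on $\F_{\Theta}$, are correct and are indeed the starting point of the paper's argument (its Step 1). But there is a genuine gap exactly where you place your ``main obstacle'': the matching of the Morse-theoretic data with the semigroup data is never proved. Concretely, your claim that a chain control set of $\Sigma_{\Theta}$, written as $\bigcup_{u\in\UC}\fix_{\Theta}(\fsh(u),w)$, ``contains $D_{\Theta}(w)_0$, giving nonempty interior'' is precisely the nontrivial content of the theorem (and of the corollary that every chain control set contains a control set with nonempty interior), and it is asserted without argument. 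Likewise, the identification $\Theta(\phi)=\Theta(\FC)$ that you propose to establish first ``by comparing recurrence structure'' is in the paper a \emph{consequence} of this theorem (Corollary \ref{flagtype}), proved afterwards by a separate argument (assuming w.l.o.g. $\inner\SC\cap\exp\fa^{+}\neq\emptyset$, comparing $wb_0$ with $\fix(\fsh(u),1)$, etc.); taking it as an input makes your argument circular, and the double-coset counting you invoke presupposes it. The same issue affects your first direction: that an $\FC$-chain control set ``is the projection of a Morse set'' is not given, it has to be extracted from maximality plus the control-set ingredient below.

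What is missing is the paper's Step 3, which builds the bridge with no a priori matching of labels: given a chain control set $E=\bigcup_{u}\fix_{\Theta}(\fsh(u),w)$, pick $g\in\inner\SC$, write $g=\varphi(\tau,e,u)$ and replace $u$ by its $\tau$-periodic extension $u^{*}$, so that $\Ad(g)\fsh(u^{*})=\fsh(\theta_{\tau}u^{*})=\fsh(u^{*})$ and the compact set $\fix_{\Theta}(\fsh(u^{*}),w)\subset E$ is $g$-invariant; a minimal set for the $g$-action inside it lies, by Proposition 2.3 of \cite{smt}, in the interior of an $\SC$-control set, which therefore sits inside $E$. This semigroup-theoretic ingredient (minimal sets of elements of $\inner\SC$ lie in interiors of control sets), together with the periodic-control trick, is what yields that every chain control set of $\Sigma_{\Theta}$ is an effective $\FC$-chain control set; the converse direction then goes through Proposition \ref{control}, Corollary 4.3.12 of \cite{ck} and maximality, roughly as in your outline. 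Without this step, or an equivalent substitute for it, your proposal does not close.
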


\begin{proof} For a given subset $E$ of $\F_{\Theta}$ the lift of $E$ is given by 
\begin{equation*}
\mathcal{E}=\{(u, x)\in \mathcal{U}\times \mathbb{F}_{\Theta}; \; \; \; \varphi^{\Theta}(\mathbb{R}, x, u)\subset E\}.
\end{equation*}
By Theorem 4.3.11 of \cite{ck} there exists a bijection between the Morse sets of $\phi^{\Theta}$ and the chain control sets of the system $\Sigma_{\Theta}$ given by:

If $\mathcal{M}_{\Theta}$ is a Morse set of $\phi^{\Theta}$, then $\pi_2(\mathcal{M}_{\Theta})$ is a chain control set of $\Sigma_{\Theta}$, where $\pi_2:\mathcal{U}\times \mathbb{F}_{\Theta}\rightarrow \mathbb{F}_{\Theta}$ is the projection on the second factor. Reciprocally, if $E$ is a chain control set of $\Sigma_{\Theta}$ its lift $\EC$ is a Morse set of $\phi^{\Theta}$.

We divide the rest of our proof in three steps:

{\bf Step 1:} If $E_{\FC}$ is a $\FC$-chain control set in $\F_{\Theta}$ then its lift $\EC_{\FC}$ is contained in a Morse set of $\phi^{\Theta}$.

By condition (ii) in the definition of $\FC$-chain control sets and the right-invariance of $\Sigma$ we have that $E_{\FC}$ is controlled chain transitive. By Theorem 4.3.11 one gets, in particular, that its the lift $\EC_{\FC}$ is chain transitive for the flow $\phi^{\Theta}$. Moreover, since $\EC_{\FC}$ is certainly $\phi^{\Theta}$-invariant, Theorem B.2.26 of \cite{ck} implies that $\EC_{\FC}$ is contained in some Morse set, since the Morse sets are the maximal $\phi^{\Theta}$-invariant chain transitive subsets of $\UC\times\F_{\Theta}$.

{\bf Step 2:} Any effective $\FC$-chain control set in $\F_{\Theta}$ is a chain control set of $\Sigma_{\Theta}$.

Let $E$ be an effective $\mathcal{F}$-chain control in $\F_{\Theta}$ and $D\subset E$ an effective control set. By Proposition \ref{control}, $D$ is a control set of $\Sigma_{\Theta}$ with nonempty interior. By Corollary 4.3.12 of \cite{ck} there exists a chain control set $E'$ of $\Sigma_{\Theta}$ such that $D\subset E'$. By the very definition of $\mathcal{F}$-chain control sets and the fact that $\inner D\neq \emptyset$ we get that $E'$ is an effective $\mathcal{F}$-chain control set in $\F_{\Theta}$ which by maximality implies that $E'\subset E$. By the previous item the inclusion $E\subset E'$ must always happens and so $E=E'$ is a chain control set of $\Sigma_{\Theta}$.

{\bf Step 3:} Any  chain control set of $\Sigma_{\Theta}$ is an effective $\FC$-chain control set.

Let $E$ be a chain control set of $\Sigma_{\Theta}$. In order to show that $E_{\Theta}$ is a $\FC$-chain control set, it is enough to show that $E$ contains an effective $\SC$-control set.

Define $\mathsf{h}:\mathcal{U}\rightarrow \Ad(G)H_{\phi }$ by $\mathsf{h}(u):=h(u, 1)$, where $h$ is the map given by Theorem \ref{morsesets}
applied to the control flow $\phi$ on the principal bundle $\UC\times G$. The map $\mathsf{h}$ is continuous and we have
that $\Ad(\varphi (t,e,u))\mathsf{h}(u)=\mathsf{h}(\theta_{t}u)$.
Moreover, the relation between the Morse sets and the chain control sets together with Theorem \ref{morsesets} give us that
\begin{equation*}
E=\bigcup_{u\in\UC} \fix_{\Theta}(\mathsf{h}(u), w), \;\;\mbox{ for some }\;\;w\in\WC.
\end{equation*}%

Let $g\in \inner \SC$ and consider $u\in\mathcal{U}$ and $\tau >0$ such that $g=\varphi(\tau, e, u)$. By extending periodically $u$ to a $\tau$-periodic control function $u^*\in\UC$ we have that 
$$g^n=\varphi(\tau, 1, u)^n=\varphi(n\tau, 1, u^*)\in\inner\SC.$$
Therefore, 
\begin{equation*}
g\fix_{\Theta}(\mathsf{h}(u^*),w)=\fix_{\Theta}(\Ad(g)\mathsf{h}(u^*), w)=\fix_{\Theta}(\mathsf{h}(\theta _{\tau }u^*), w)=\fix_{\Theta}(\mathsf{h}(u^*), w),
\end{equation*}%
and so $\fix_{\Theta}(\mathsf{h}(u^*), w)$ is a $g$-invariant compact set. Hence, there exists a nonempty subset $\Omega \subset \fix_{\Theta}(\mathsf{h}(u^*), w)$, that is
minimal for the $g$-action in $\mathbb{F}_{\Theta}$. By Proposition 2.3 of \cite{smt}, the set $\Omega$ has to be contained in the interior of a $\SC$-control set $D$ in $\F_{\Theta}$. Since $\Omega \subset E$ we must have that $D\subset E$ showing that $E$ is in fact an effective $\FC$-chain control set which concludes the 	proof.
\end{proof}

As a direct consequence we have the following.

\begin{corollary}
Any chain control set of system $\Sigma_{\Theta}$ contains a control set with nonempty interior.
\end{corollary}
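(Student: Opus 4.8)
The plan is to read this off directly from Theorem~\ref{chaincontrol} combined with Proposition~\ref{control}; the corollary is essentially bookkeeping once the two notions of chain control set have been identified. First I would fix an arbitrary chain control set $E\subset\F_{\Theta}$ of the induced system $\Sigma_{\Theta}$. By Theorem~\ref{chaincontrol}, $E$ is a $\FC$-chain control set of the semigroup $\SC=\OC^+(1)$. What is actually needed, however, is the stronger fact established in Step~3 of the proof of that theorem: $E$ is an \emph{effective} $\FC$-chain control set, meaning that it contains an effective $\SC$-control set $D$.

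Next I would apply Proposition~\ref{control}, which says that an effective $\SC$-control set in $\F_{\Theta}$ is precisely a control set of $\Sigma_{\Theta}$ with nonempty interior. Hence $D$ is a control set of $\Sigma_{\Theta}$ with $\inner D\neq\emptyset$ and $D\subset E$, which is exactly the assertion of the corollary.

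I do not expect any genuine obstacle here; the one point that must be handled carefully is to invoke the \emph{effective} version of the $\FC$-chain control set conclusion (the inclusion $D_{\Theta}(w)\subset E$ coming from Theorem~\ref{controlsets} and Proposition~\ref{chaincontrolsets}, as re-derived in Step~3 via minimality of the $g$-action on $\fix_{\Theta}(\fsh(u^*),w)$ and Proposition~2.3 of \cite{smt}), rather than merely the bare definition of a $\FC$-chain control set, since it is precisely the nonemptiness of $\inner D$ that carries the content. One could also phrase the statement intrinsically: by the Remark following the definition of chain control sets, every control set with nonempty interior already sits inside a chain control set, so the real content is the converse direction — that a chain control set is never too small to contain such a control set — and that is exactly what the argument above delivers.
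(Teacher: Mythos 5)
Your argument is correct and is exactly how the paper intends the corollary to follow: the paper states it as a direct consequence of Theorem~\ref{chaincontrol}, whose Step~3 shows every chain control set of $\Sigma_{\Theta}$ is an \emph{effective} $\FC$-chain control set, hence contains an effective $\SC$-control set, which by Proposition~\ref{control} is a control set of $\Sigma_{\Theta}$ with nonempty interior. Your remark that one must use the effectiveness from Step~3, and not merely the bare statement of the equivalence, is a correct and worthwhile precision.
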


We have also that the flag type of $\phi$ and the $\FC$-flag type of $\SC$ coincides.

\begin{corollary}
\label{flagtype} With the previous assumptions, it holds that 
\begin{equation*}
\Theta(\mathcal{F})=\Theta(\phi).
\end{equation*}
\end{corollary}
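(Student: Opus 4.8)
The plan is to deduce Corollary \ref{flagtype} from the equality of subgroups $\mathcal{W}_{\mathcal{F}}(\SC)=\WC_{\Theta(\phi)}$ of $\WC$; this suffices because, by definition, $\WC_{\Theta(\FC)}=\mathcal{W}_{\mathcal{F}}(\SC)$ and $\WC_{\Theta(\phi)}$, and $\Theta\mapsto\WC_{\Theta}$ is injective on subsets of $\Lambda$. To obtain this equality I would match, label by label, the two $\WC$-indexed families of chain control sets. First, by Theorem \ref{chaincontrol} and the bijection between Morse sets of $\phi^{\Theta}$ and chain control sets of $\Sigma_{\Theta}$ of Theorem 4.3.11 of \cite{ck} (under which a Morse set $\mathcal{M}$ corresponds to $\pi_2(\mathcal{M})$, and $\pi_2$ is injective on Morse sets), for every $\Theta\subset\Lambda$ the collections $\{E_{\Theta}(w):w\in\WC\}$ and $\{\pi_2(\mathcal{M}_{\Theta}(w)):w\in\WC\}$ are one and the same family of subsets of $\F_{\Theta}$; the only remaining point is that the labels can be aligned.

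For this I would fix a single split regular element $Y=\Ad(g)H$ with $H\in\fa^{+}$ and $\rme^{Y}\in\inner\SC$ — such $Y$ exists since, by Theorem \ref{controlsets}, the core $D_{\Theta}(w)_0$ is nonempty — write $\rme^{Y}=\varphi(\tau,e,u)$ and extend $u$ to a $\tau$-periodic $u^{*}\in\UC$. Exactly as in Step 3 of the proof of Theorem \ref{chaincontrol}, $\Ad(\varphi(t,e,u^{*}))\fsh(u^{*})=\fsh(\theta_{t}u^{*})$ gives $\Ad(\rme^{Y})\fsh(u^{*})=\fsh(u^{*})$, so $\fsh(u^{*})$ lies in the centralizer of $\rme^{Y}$ in $\fg$, which is $\Ad(g)(\fm\oplus\fa)$ because $H$ is regular. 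Since $\fsh(u^{*})\in\Ad(G)H_{\phi}$ is $\ad$-semisimple with real eigenvalues and every such element of $\fm\oplus\fa$ lies in $\fa$ — its $\fm$-component would carry a nonzero purely imaginary eigenvalue yet act trivially on $\fg$, hence vanish by semisimplicity — I obtain $\Ad(g^{-1})\fsh(u^{*})\in\fa\cap\Ad(G)H_{\phi}=\WC H_{\phi}$; choose $w_{1}\in\WC$ and a representative $\tilde w_{1}\in M^{\ast}$ with $\fsh(u^{*})=\Ad(g\tilde w_{1})H_{\phi}$.

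Now $\fix_{\Theta}(Y,w)=g\cdot\fix_{\Theta}(H,w)=g\,w\,b_{\Theta}$ (a point, since $H$ is regular) while $\fix_{\Theta}(\fsh(u^{*}),w')=(g\tilde w_{1})\,K_{H_{\phi}}\,w'\,b_{\Theta}$; comparing, $g\,w\,b_{\Theta}\in(g\tilde w_{1})K_{H_{\phi}}w'b_{\Theta}$ holds precisely when $w_{1}^{-1}w\in\WC_{H_{\phi}}w'\WC_{\Theta}$, i.e.\ for the single component $w'=w_{1}^{-1}w$, so $\fix_{\Theta}(Y,w)\in\pi_2(\mathcal{M}_{\Theta}(w_{1}^{-1}w))$. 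On the other hand $\fix_{\Theta}(Y,w)\in D_{\Theta}(w)_0\subset E_{\Theta}(w)$ by Proposition \ref{chaincontrolsets}; as distinct chain control sets of $\Sigma_{\Theta}$ are disjoint, this forces $E_{\Theta}(w)=\pi_2(\mathcal{M}_{\Theta}(w_{1}^{-1}w))$, that is, $E_{\Theta}(w_{1}w)=\pi_2(\mathcal{M}_{\Theta}(w))$ for all $w\in\WC$ with this fixed $w_{1}$. To pin down $w_{1}$, take $w=1$: $E_{\Theta}(w_{1})=\pi_2(\mathcal{M}_{\Theta}(1))$. But $\mathcal{M}_{\Theta}(1)$ is the attractor Morse set (its fibers are the attractor components $\fix_{\Theta}(h(q),1)=Z_{h(q)}b_{\Theta}$), so $\pi_2(\mathcal{M}_{\Theta}(1))$ is the invariant chain control set of $\Sigma_{\Theta}$; and $E_{\Theta}(1)$ contains the invariant $\SC$-control set $D_{\Theta}(1)$, so it too is the invariant chain control set. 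Hence $E_{\Theta}(w_{1})=E_{\Theta}(1)$, which for $\Theta=\emptyset$ (where $\WC_{\Theta}$ is trivial) reads $w_{1}\in\mathcal{W}_{\mathcal{F}}(\SC)$; therefore $E_{\Theta}(w_{1}w)=E_{\Theta}(w)$ and consequently $\pi_2(\mathcal{M}_{\Theta}(w))=E_{\Theta}(w)$ for every $w$ and every $\Theta$.

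Finally, specializing again to $\Theta=\emptyset$: $w\in\mathcal{W}_{\mathcal{F}}(\SC)\Leftrightarrow E_{\emptyset}(w)=E_{\emptyset}(1)\Leftrightarrow\pi_2(\mathcal{M}_{\emptyset}(w))=\pi_2(\mathcal{M}_{\emptyset}(1))\Leftrightarrow\mathcal{M}_{\emptyset}(w)=\mathcal{M}_{\emptyset}(1)\Leftrightarrow w\in\WC_{\Theta(\phi)}$, using injectivity of $\pi_2$ on Morse sets and Theorem \ref{morsesets}. Thus $\WC_{\Theta(\FC)}=\mathcal{W}_{\mathcal{F}}(\SC)=\WC_{\Theta(\phi)}$, and hence $\Theta(\FC)=\Theta(\phi)$. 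The step I expect to be the main obstacle is the Lie-theoretic one in the second and third paragraphs: identifying $\fsh(u^{*})$, up to $\Ad(g)$, with a point of the Weyl orbit $\WC H_{\phi}$ (which genuinely uses the regularity of $H$, the centralizer computation, and semisimplicity), and then correctly tracking the resulting double coset $w_{1}^{-1}w$; once that is in place, the identification $E_{\Theta}(w)=\pi_2(\mathcal{M}_{\Theta}(w))$ and the final reading off of $\mathcal{W}_{\mathcal{F}}(\SC)=\WC_{\Theta(\phi)}$ are routine.
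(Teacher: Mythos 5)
Your argument is correct in substance, but it takes a genuinely different route from the paper's. The paper uses Theorem \ref{chaincontrol} only through a counting argument: the coincidence of the two families of chain control sets on $\F$ gives $\left|\WC_{\FC}(\SC)\right|=\left|\WC_{\Theta(\phi)}\right|$, so that the single inclusion $\WC_{\FC}(\SC)\subset\WC_{\Theta(\phi)}$ suffices, and that inclusion is obtained by normalizing the Weyl chamber so that $\inner\SC\cap\exp\fa^{+}\neq\emptyset$ and analysing the point $wb_{0}\in D(w)_{0}$ inside $\fix(\fsh(u),1)$. You instead fix one split regular $Y=\Ad(g)H$ with $\rme^{Y}\in\inner\SC$, periodize the control, and identify $\fsh(u^{*})\in\Ad(g)\,\WC H_{\phi}$ from the fixed set of $\Ad(\rme^{Y})$; feeding the single point $\fix_{\Theta}(Y,w)\in D_{\Theta}(w)_{0}$ into the pairwise disjointness of chain control sets then yields the exact label matching $E_{\Theta}(w)=\pi_{2}(\MC_{\Theta}(w_{1}^{-1}w))$ for every $w$ and $\Theta$, which is stronger than what the paper extracts and dispenses with both the counting step and the chamber normalization. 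The Lie-theoretic core (fixed set of $\Ad(\rme^{Y})$ equal to $\Ad(g)(\fm\oplus\fa)$ by regularity; a split element of $\fm\oplus\fa$ lies in $\fa$; $\fa\cap\Ad(G)H_{\phi}=\WC H_{\phi}$; $vb_{\Theta}\in\fix_{\Theta}(H_{\phi},w')$ iff $v\in\WC_{H_{\phi}}w'\WC_{\Theta}$) is sound.

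The one place where you assert rather than prove is the pinning down of $w_{1}$, namely the claim that $\pi_{2}(\MC_{\Theta}(1))$ is ``the invariant chain control set'' and that $E_{\Theta}(1)$ is the same object. This step is genuinely needed: without it your construction only gives $\WC_{\FC}(\SC)=w_{1}\WC_{\Theta(\phi)}w_{1}^{-1}$, and a conjugate of a standard parabolic subgroup of $\WC$ need not coincide with a standard parabolic subgroup, so equality of the flag types would not follow. It is, however, exactly the same input the paper itself uses without proof (it asserts that $\pi_{2}(\MC(1))$ is $\SC$-invariant, hence contains the unique invariant control set $D(1)$, and therefore equals $E(1)$ by disjointness of chain control sets), so your proof is at the same level of rigor at that point. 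To make it self-contained one can argue, for instance, that a point of $\inner D_{\Theta}(1)$ chosen in the fiberwise open and dense stable set of the attractor component has its $\omega$-limit set contained both in $\MC_{\Theta}(1)$ and in $\UC\times D_{\Theta}(1)$, so that $\pi_{2}(\MC_{\Theta}(1))$ meets $E_{\Theta}(1)$ and hence equals it.
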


\begin{proof}
Since for any two given subsets $\Theta_1, \Theta_2\subset\Lambda$ we have that $\Theta_1=\Theta_2$ if and only if $\WC_{\Theta_1}=\WC_{\Theta_2}$ it is enough for us to show that $\WC_{\FC}(\SC)=\WC_{\Theta(\phi)}$.

By Theorem \ref{chaincontrol} we have that the set of the chain control sets of the induced systems in $\F$ and the set of the $\FC$-chain control sets in $\F$ coincides. Consequently
\begin{equation*}
\left|\mathcal{W}_{\mathcal{F}}(\SC)\backslash \mathcal{W}\right|=\left|%
\mathcal{W}_{\Theta(\phi)}\backslash \mathcal{W}\right|\mbox{ implying that }%
\left|\mathcal{W}_{\mathcal{F}}(\SC)\right|=\left|\mathcal{W}_{\Theta(\phi)}\right|.
\end{equation*}
Then it is enough to show that $\mathcal{W}_{\mathcal{F}}(\SC)\subset \mathcal{%
W}_{\Theta(\phi)}$.

Since $\pi _{2}(\mathcal{M}(1))$ is $\SC$-invariant, it contains the only $\SC$-invariant control set in $\F$, and so $\pi_{2}(\mathcal{M}(1))=E(1)$.
Let then $w\in \mathcal{W}_{\mathcal{F}}(\SC)$. By definition $E(w)$ is the only $\mathcal{F}$-chain control set containing $D(w)$. Moreover, since $w\in \mathcal{W}_{\mathcal{F}}(\SC)$ we obtain $E(w)=E(1)=\pi_{2}(\mathcal{M}(1))$. 

Consequently 
\begin{equation*}
D(w)\subset \,\bigcup_{u\in \mathcal{U}}\fix(\mathsf{h}(u), 1).
\end{equation*}

Since any other choice of positive Weyl chamber just conjugates the flag
types, we can assume w.l.o.g. that 
\begin{equation*}
\inner \SC\cap \exp \mathfrak{a}^{+}\neq \emptyset .
\end{equation*}
Then $wb_{0}\in D(w)_{0}$ and there exists $u\in \mathcal{U}$ such that $wb_{0}\in \fix(\mathsf{h}(u),1)$, which gives us that $wb_{0}=kb_{0}$ where $%
k\in K$ is such that $\Ad(k)H_{\phi}=\mathsf{h}(u)$. The equality $wb_{0}=kb_{0}$ implies that $wH_{\phi}=\Ad(k)H_{\phi}=\mathsf{h}(u)$
and consequently  
\begin{equation*}
\fix(\mathsf{h}(u),1)=\fix(wH_{\phi},1)=(wZ_{H_{\phi}}w^{-1})wb_{0}=\fix(wH_{\phi },w)=\fix(\mathsf{h}(u), w).
\end{equation*}%
Hence $w\in \mathcal{W}_{\Theta(\phi)}$ implying that $\mathcal{W}_{\mathcal{F}}(S)\subset \mathcal{W}_{\Theta(\phi)}$ and concluding the proof.
\end{proof}

The above corollary implies in particular that $\Theta(\SC)\subset\Theta(\phi)$. We are now interested to see what happens when the equality holds, that is, when any chain control set of $\Sigma_{\Theta}$ contains exactly one control set with nonempty interior. 

\begin{definition}
Let $\Theta\subset\Lambda$ and $w\in\WC$. We say that the chain control set $E_{\Theta}(w)$ is {\bf (uniformly) hyperbolic} if for each $(u,x)\in\MC_{\Theta}(w)$ there exists a decomposition%
\begin{equation*}
  T_x\F_{\Theta} = E^-_{u,x} \oplus E^+_{u,x}%
\end{equation*}
such that the following properties hold:%
\begin{enumerate}
\item[(H1)] $(\rmd\varphi_{t,u})_x E^{\pm}_{u,x} = E^{\pm}_{\phi_t(u,x)}$ for all $t\in\R$ and $(u,x)\in\MC_{\Theta}(w)$.%
\item[(H2)] There exist constants $c\in(0,1]$, $\lambda>0$ such that for all $(u,x)\in\MC_{\Theta}(w)$ we have%
\begin{equation*}
  \left|(\rmd\varphi_{t,u})_x v\right| \leq c^{-1}\rme^{-\lambda t}|v| \mbox{\quad for all\ } t\geq0,\ v\in E^-_{u,x},%
\end{equation*}
and%
\begin{equation*}
  \left|(\rmd\varphi_{t,u})_x v\right| \geq c\rme^{\lambda t}|v| \mbox{\quad for all\ } t\geq0,\ v\in E^+_{u,x}.%
\end{equation*}
\item[(H3)] The linear subspaces $E^{\pm}_{u,x}$ depend continuously on $(u,x)$, i.e., the projections $\pi^{\pm}_{u,x}:T_x\F_{\Theta} \rightarrow E^{\pm}_{u,x}$ along $E^{\mp}_{u,x}$ depend continuously on $(u,x)$.%
\end{enumerate}
\end{definition}

In \cite{ask} it is shown that any chain control set $E_{\Theta}(w)$ is parcially hyperbolic, that is, the above decomposition of the tangent spaces have a third subspace that corresponds to the tangent space of the submanifold $\fix_{\Theta}(h(u), w)$. By Theorem 5.6 of \cite{ask}, hyperbolicity happens if and only if $\langle\Theta(\phi)\rangle\subset w\langle\Theta\rangle$ if and only if for any $u\in\UC$ the set $\fix_{\Theta}(h(u), w)$ consists of only one point. Consequently, if $E_{\Theta}(w)$ is hyperbolic, Theorem 6.1 of \cite{ask} implies that $\cl(D_{\Theta}(w))=E_{\Theta}(w)$.

The above together with the assumption that the flag types agree give us the following result concerning invariant control sets.

\begin{proposition}
\label{invariant}
If $\Theta(\SC)=\Theta(\phi)$ then 
$$D_{\Theta}(1)=E_{\Theta}(1)\;\;\;\mbox{ and }\;\;\;\cl\left(D_{\Theta}(w_0)\right)=E_{\Theta}(w_0)$$
for any $\Theta\subset\Lambda$.
\end{proposition}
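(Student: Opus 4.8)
The plan is to prove the two equalities separately, exploiting the fact that $D_{\Theta}(1)$ is the unique $\SC$-invariant control set and $E_{\Theta}(1)$ is the unique $\SC$-invariant chain control set (and dually for $w_0$ with $\SC^{-1}$), together with the hyperbolicity criterion recalled just before the statement. First I would treat the invariant case $w=1$. Since $\Theta(\SC)=\Theta(\phi)$ and, by Corollary \ref{flagtype}, $\Theta(\phi)=\Theta(\FC)$, we have $\Theta(\SC)=\Theta(\FC)$, hence $\WC_{\Theta(\SC)}=\WC_{\FC}(\SC)$. By Theorem \ref{chaincontrol}, $E_{\Theta}(1)$ is a chain control set of $\Sigma_{\Theta}$, and by Proposition \ref{control} it contains the control set $D_{\Theta}(1)$ with nonempty interior. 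Because $\WC_{\Theta(\SC)}w\WC_{\Theta}=\WC_{\Theta(\SC)}\WC_{\Theta}$ exactly when $\WC_{\FC}(\SC)w\WC_{\Theta}=\WC_{\FC}(\SC)\WC_{\Theta}$, the number of effective $\SC$-control sets in $\F_{\Theta}$ equals the number of $\FC$-chain control sets, and each chain control set contains exactly one control set with nonempty interior. So it suffices to show $D_{\Theta}(1)$ is already closed, equivalently that the $\FC$-chain control set containing it has empty ``extra'' part.

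For $w=1$ I would invoke hyperbolicity: by Theorem 5.6 of \cite{ask}, $E_{\Theta}(w)$ is hyperbolic iff $\langle\Theta(\phi)\rangle\subset w\langle\Theta\rangle$, and for $w=1$ this reads $\langle\Theta(\phi)\rangle\subset\langle\Theta\rangle$. This need not hold for a general $\Theta$, so the clean hyperbolicity route is not automatic; instead I would argue directly that $D_{\Theta}(1)$ is closed. The key point is that $D_{\Theta}(1)$ is the unique \emph{invariant} control set, so $\cl(D_{\Theta}(1))$ is also forward-invariant under $\SC$ and, being closed, it is contained in a chain control set; conversely the unique $\SC$-invariant chain control set $E_{\Theta}(1)$ satisfies $E_{\Theta}(1)\subset\cl(\OC^+(x))$ for $x\in E_{\Theta}(1)$. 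Using that $\pi_2(\MC_{\Theta}(1))=E_{\Theta}(1)$ and that $\MC_{\Theta}(1)$ is the attractor Morse set (the $\omega$-limit of every point lies in $\bigcup\MC_{\Theta}(w)$ and $\MC_{\Theta}(1)$ is terminal), one gets that for every $(u,x)$, $\omega(u,x)\subset\MC_{\Theta}(1)$, hence $E_{\Theta}(1)$ is the unique set with $E_{\Theta}(1)\subset\omega(x)$-closure for trajectories; combined with invariance of $D_{\Theta}(1)$ and density of $\inner D_{\Theta}(1)$ in $D_{\Theta}(1)$, maximality of $D_{\Theta}(1)$ among sets contained in $\cl(\OC^+(\cdot))$ forces $\cl(D_{\Theta}(1))=E_{\Theta}(1)$. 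The dual statement $\cl(D_{\Theta}(w_0))=E_{\Theta}(w_0)$ follows by applying the same argument to the reversed system, whose semigroup is $\SC^{-1}$ and whose unique invariant control set is $D_{\Theta}(w_0)$; here, however, $D_{\Theta}(w_0)$ need not be closed (it is only a \emph{repeller}), which is why we only claim the equality after taking closure.

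Alternatively, and perhaps more cleanly, I would phrase the $w_0$ case via hyperbolicity after a change of Weyl chamber: replacing $\fa^+$ by $w_0\fa^+$ interchanges the roles of $1$ and $w_0$ and conjugates flag types, so $E_{\Theta}(w_0)$ in the original picture becomes the invariant chain control set in the conjugated picture; applying the already-established invariant case there and translating back yields $\cl(D_{\Theta}(w_0))=E_{\Theta}(w_0)$. The main obstacle I anticipate is the first equality $D_{\Theta}(1)=E_{\Theta}(1)$ \emph{without} the closure: one must show that the attractor chain control set contains \emph{no} points outside $D_{\Theta}(1)$, and this genuinely uses that $D_{\Theta}(1)$, being the invariant control set, is itself closed — a fact that is true for invariant control sets but fails for non-invariant ones. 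Establishing that closedness cleanly (e.g.\ from the fact that $\cl(\OC^+(x))$ for $x$ in the invariant control set is exactly $\cl(D_{\Theta}(1))$, so $D_{\Theta}(1)$ already satisfies condition (ii) of a control set on its own closure and maximality gives $D_{\Theta}(1)=\cl(D_{\Theta}(1))$) is the crux; once that is in hand, the inclusion $D_{\Theta}(1)\subset E_{\Theta}(1)$ from Proposition \ref{chaincontrolsets} plus the count of chain control sets via Corollary \ref{flagtype} upgrades to equality, and the $w_0$ statement follows by duality.
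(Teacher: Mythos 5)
Your argument has a genuine gap at exactly the point you yourself call the crux. You claim that it ``suffices to show $D_{\Theta}(1)$ is already closed, equivalently that the $\FC$-chain control set containing it has empty extra part'' --- but these are not equivalent. Closedness of the invariant control set is automatic and easy (the paper also uses it), yet it gives nothing toward the hard inclusion $E_{\Theta}(1)\subset D_{\Theta}(1)$: a chain control set can strictly contain the closed invariant control set it carries (this is precisely what happens when $\Theta(\SC)\subsetneq\Theta(\phi)$). Your direct argument for that inclusion does not work: the assertion that $E_{\Theta}(1)\subset\cl\left(\OC^{+}(x)\right)$ for $x\in E_{\Theta}(1)$ is the defining property (ii) of a \emph{control} set, not of a chain control set, so assuming it begs the question; and the claim that $\omega(u,x)\subset\MC_{\Theta}(1)$ for \emph{every} $(u,x)$ is false --- the attractor Morse set only captures $\omega$-limits of points in its domain of attraction. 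The counting argument via Corollary \ref{flagtype} gives a bijection between control sets with nonempty interior and chain control sets, but by itself it cannot force $\cl(D_{\Theta}(w))=E_{\Theta}(w)$ for any single $w$. Note also that in your direct argument the hypothesis $\Theta(\SC)=\Theta(\phi)$ essentially disappears after the counting step, which is a warning sign, since without it the conclusion is false.

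You correctly observed that hyperbolicity does not apply for general $\Theta$ (since $\langle\Theta(\phi)\rangle\subset\langle\Theta\rangle$ may fail), but the paper's workaround is what you are missing: apply the hyperbolicity criterion only on the specific flag $\F_{\Theta(\phi)}$ with $w=1$, where $\langle\Theta(\phi)\rangle\subset\langle\Theta(\phi)\rangle$ holds trivially, to get $E_{\Theta(\phi)}(1)=\cl\left(D_{\Theta(\phi)}(1)\right)=D_{\Theta(\phi)}(1)$; then lift to the maximal flag using the fibration results $\pi_{\Theta(\SC)}^{-1}\left(D_{\Theta(\SC)}(1)\right)=D(1)$ (Theorem 4.3 of \cite{smt}) and $\pi_{\Theta(\phi)}^{-1}\left(E_{\Theta(\phi)}(1)\right)=E(1)$ (Proposition 8.12 of \cite{cbsm}), which is exactly where $\Theta(\SC)=\Theta(\phi)$ is used to conclude $D(1)=E(1)$ in $\F$; finally project down via $D_{\Theta}(1)=\pi_{\Theta}(D(1))$ and $E_{\Theta}(1)=\pi_{\Theta}(E(1))$ to get the statement for every $\Theta\subset\Lambda$. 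Your treatment of the $w_0$ case by duality (reversed-time system with semigroup $\SC^{-1}$) matches the paper's, but it rests on the first equality, which your argument does not establish.
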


\begin{proof}
We will show only the first equality since the other is obtained from it by considering the negative time systems. By the above discussion we have that and Theorem \ref{chaincontrol} we have that the chain control set $E_{\Theta(\phi)}(1)\subset \F_{\Theta(\phi)}$ is uniformly hyperbolic and so it is the closure of a control set. Therefore, 
$$E_{\Theta(\phi)}(1)=\cl\left(D_{\Theta(\phi)}(1)\right)=D_{\Theta(\phi)}(1),$$
since $D_{\Theta(\phi)}(1)$ is closed. Moreover, Theorem 4.3 of \cite{smt} assures that $\pi_{\Theta(\SC)}^{-1}\left(D_{\Theta(\SC)}(1)\right)=D(1)$ and Proposition 8.12 of \cite{cbsm}, together with the relation between the chain control sets and the Morse sets of the control flow, implies  that $\pi_{\Theta(\phi)}^{-1}\left(E_{\Theta(\phi)}(1)\right)=E(1)$, where $\pi_{\Theta(\SC)}:\F\rightarrow\F_{\Theta(\SC)}$ and $\pi_{\Theta(\phi)}:\F\rightarrow\F_{\Theta(\phi)}$ are the canonical projections. Therefore, if $\Theta(\SC)=\Theta(\phi)$ we have that 
$$E(1)=\pi_{\Theta(\phi)}^{-1}\left(E_{\Theta(\phi)}(1)\right)=\pi_{\Theta(\SC)}^{-1}\left(D_{\Theta(\phi)}(1)\right)=D(1).$$
Since for any $\Theta\subset\Lambda$ we have that $D_{\Theta}(1)=\pi_{\Theta}(D(1))$ and $E_{\Theta}(1)=\pi_{\Theta}(E(1))$ the result follows.
\end{proof}

\bigskip
For any $w\in\WC$, the {\bf domain of attraction} of $D(w)$ is the subset of $\F$ given by
$$\AC(D(w))=\left\{x\in\F, \;gx\in D(w) \;\;\mbox{ for some }\;g\in\SC\right\}.$$
The following lemma relates the domain of attraction and the core of an effective control set.

\begin{lemma}
\label{intersection}
For any $w\in\WC$ it holds that 
$$D(w)_0=\AC(D(w))\cap \AC^*(D^*(w)), $$
where $D^*(w)$ is the unique $\SC^{-1}$-control set in $\F$ whose core is $D^*(w)_0=D(w)_0$ and $\AC^*(D^*(w))$ its domain of attraction. 
\end{lemma}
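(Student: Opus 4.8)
The plan is to prove the two inclusions separately, using the description of the core in terms of split-regular fixed points together with the structure of the domains of attraction. First I would recall that by Theorem \ref{controlsets} the core of $D(w)$ is $D(w)_0 = \{\fix(Y,w) : Y \text{ split regular},\ \rme^Y\in\inner\SC\}$, and analogously $D^*(w)_0 = \{\fix(Y,w) : Y \text{ split regular},\ \rme^{-Y}\in\inner\SC^{-1}\} = \{\fix(Y,w) : Y \text{ split regular},\ \rme^Y\in\inner\SC\}$, so that indeed $D(w)_0 = D^*(w)_0$ as asserted in the statement. For the inclusion $D(w)_0 \subset \AC(D(w))\cap\AC^*(D^*(w))$ I would argue that if $x\in D(w)_0$ then $x\in(\inner\SC)x$, so in particular $x\in \SC x \subset \SC^{-1}\cdot(\SC x)$... more directly, $x\in D(w)\subset\cl(\SC x)$ and $x\in(\inner\SC)x$ gives $x\in\SC x$, hence trivially $x\in\AC(D(w))$ taking $g=e$ or the element of $\inner\SC$ fixing $x$; the same reasoning with $\SC^{-1}$ and $D^*(w)$ gives $x\in\AC^*(D^*(w))$.

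The substantive inclusion is the reverse one, $\AC(D(w))\cap\AC^*(D^*(w))\subset D(w)_0$. Here I would take $x$ with $g_1 x\in D(w)$ for some $g_1\in\SC$ and $g_2 x\in D^*(w)$ for some $g_2\in\SC^{-1}$. The idea is that $x$ is simultaneously in the domain of attraction of $D(w)$ (a "forward" control set) and of $D^*(w)$ (the corresponding "backward" control set); since $D^*(w)_0 = D(w)_0$, these two approaches are compatible, and a point that can be steered into $D(w)$ by $\SC$ and into $D^*(w)$ by $\SC^{-1}$ must already lie in the common core. Concretely, I would use that $D(w)$ is the unique $\SC$-control set meeting $\AC(D(w))$ in the appropriate sense, and exploit the characterization of $D(w)_0$ via fixed points of split-regular elements: pick a split-regular $Y$ with $\rme^Y\in\inner\SC$ and $wb_\Theta$-type fixed point structure, so that $g=\rme^Y$ satisfies $g^n g_1 x\to D(w)_0$ and $g_2 g^{-n}\cdots$ drives $x$ toward $D^*(w)_0$; combining, the only point consistent with both is $x\in D(w)_0$. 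Alternatively, and perhaps more cleanly, I would invoke that on the maximal flag manifold $\F$ the $\SC$-control sets are totally ordered and $D(w)_0$ is exactly the set of points $y$ such that $y\in\cl(\SC y)$ and $y\in\cl(\SC^{-1} y)$ — i.e., recurrent points — and then show $\AC(D(w))\cap\AC^*(D^*(w))$ consists precisely of such recurrent points via the semigroup identities $\SC g_1 = \SC$ for $g_1\in\SC$.

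I would expect the main obstacle to be making the reverse inclusion fully rigorous without circularity: one must rule out that $x$ lies in the domain of attraction of $D(w)$ while genuinely lying outside its core, which is possible in general for a point in $\AC(D(w))\setminus D(w)$. The resolution should come from the extra constraint imposed by $x\in\AC^*(D^*(w))$ together with the key fact $D^*(w)_0=D(w)_0$: I would show that if $g_1x\in D(w)$ and $g_2x\in D^*(w)$ then, choosing $h\in\inner\SC$ with $h(g_1 x)\in D(w)_0$, the element $h g_1\in\inner\SC$ satisfies $(hg_1)x\in D(w)_0=D^*(w)_0\subset(\inner\SC^{-1})(hg_1 x)$, so applying a suitable element of $\SC^{-1}$ returns to $x$ and shows $x\in(\inner\SC^{-1})(\inner\SC)x$; a symmetric argument with $g_2$ yields $x\in(\inner\SC)(\inner\SC^{-1})x$, and combining these — using that $x\in\AC^*$ forces the backward orbit to actually land in the same core — gives $x\in(\inner\SC)x$, i.e., $x\in D(w)_0$. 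The technical care needed is in tracking which semigroup acts at each stage and in invoking Proposition 2.3 of \cite{smt} (points of a $g$-minimal set lie in the interior of a control set) at the right moment; everything else is bookkeeping with the semigroup axioms and the fixed-point description of cores.
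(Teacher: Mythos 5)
Your setup matches the paper's: the easy inclusion via $D(w)_0=D^*(w)_0$, and for the hard one you correctly reach the point where $hg_1\in\inner\SC$ with $hg_1x\in D(w)_0$ and, symmetrically, $h'g_2\in\inner\SC^{-1}$ with $h'g_2x\in D^*(w)_0=D(w)_0$. But the decisive step is missing. The paper closes the argument by using \emph{exact} controllability inside the common core: there exists $k\in\SC$ with $k(hg_1x)=h'g_2x$, so the element $(h'g_2)^{-1}\,k\,hg_1$ fixes $x$, and since $\inner\SC$ is an ideal in $\SC$ this element lies in $\inner\SC$, giving $x\in(\inner\SC)x$ and hence $x\in D(w)_0$. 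Your substitute for this step does not work: the relations you derive, $x\in(\inner\SC^{-1})(\inner\SC)x$ and $x\in(\inner\SC)(\inner\SC^{-1})x$, are trivially true (they amount to $x=(hg_1)^{-1}(hg_1)x$) and do not combine to yield $x\in(\inner\SC)x$; the phrase ``applying a suitable element of $\SC^{-1}$ returns to $x$'' is exactly what needs to be constructed, and it is obtained only by matching the two core points $hg_1x$ and $h'g_2x$ through the core's transitivity.

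The two alternative routes you sketch are also flawed. Iterating a split regular $g=\rme^Y\in\inner\SC$ does not drive $g^ng_1x$ toward $D(w)_0$: for generic points the iterates converge to the attractor component $\fix(Y,1)$, i.e.\ toward $D(1)$, so this argument cannot isolate $D(w)$ for general $w$. And the claimed characterization of the core as the ``recurrent points'' $\{y:\ y\in\cl(\SC y)\ \mbox{and}\ y\in\cl(\SC^{-1}y)\}$ is not justified: every point of every $\SC$-control set satisfies $y\in\cl(\SC y)$, so that set is a union over control sets and, even pointwise, approximate return in both time directions does not give the exact return $y\in(\inner\SC)y$ that defines the core. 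So the proof as proposed has a genuine gap precisely at the combination step, which is the heart of the lemma.
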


\begin{proof}
Since $D(w)_0=D^*(w)_0$ we already have that 
$$D(w)_0\subset\AC(D(w))\cap \AC^*(D^*(w)).$$
For any $x\in\AC(D(w))$ there exists $g\in\SC$ such that $gx\in D(w)$. By using condition (ii) in the definition of $\SC$-control sets implieswe get that $D(w)_0\subset D(w)\subset\cl(\SC gx)$. Since $D(w)_0$ is an open set and $\inner\SC$ is dense in $\SC$, we have that $D(w)_0\cap \inner\SC gx\neq\emptyset$. 

Analogously, if $x\in \AC^*(D^*(w))$ we get that $D^*(w)_0\cap\inner\SC^{-1}(g'x)\neq\emptyset$ for some $g'\in\SC^{-1}$. Therefore, if $x\in\AC(D(w))\cap\AC^*(D^*(w))$ there exists $h'\in\SC^{-1}$ and $h\in\SC$ such that $h'g'x, hgx\in D(w)_0$, since $D(w)_0=D^*(w)_0$. Being that in $D(w)_0$ we have controllability, there exists $k\in\SC$ such that 
$$khgx=h'g'x\Rightarrow ((h'g')^{-1}khg)x=x.$$
However, since $\inner\SC$ in $\SC$-invariant we get that $(h'g')^{-1}khg\in\inner\SC$ showing that $x\in D(w)_0$ and concluding the proof.
\end{proof}

\bigskip
By Theorem 6.3 of \cite{sm}, the above domain of attraction can be characterized as follows: For a finite sequence $\alpha_1, \ldots, \alpha_n$ in $\Lambda$ let us denote by $s_1, \ldots, s_n$ the reflections with respect these roots and consider $P_i:=P_{\{\alpha_i\}}$. The corresponding flag manifold is $\F_i=G/P_i$ and we denote by $\pi_i$ the canonical projection of $\F$ onto $\F_i$. Moreover, for a given subset $X$ of $\F$ we denote by $\gamma_i(X):=\pi_i^{-1}\pi_i(X)$ the operation of exhausting the subset $X$ with the fibers of $\pi_i$.

Now, take $w\in\WC$ and consider the reduced expressions $w=r_m\cdots r_1$ and $w_0w=s_n\cdots s_1$ with exhausting maps $\gamma_i'$ and $\gamma_j$, respectively. It holds that 
\begin{equation}
\label{domain}
\AC(D(w))=\gamma_1\cdots\gamma_n(D(w_0))\;\;\mbox{ and }\;\;\AC^*(D^*(w))=\gamma'_1\cdots\gamma'_m(D(1))
\end{equation}

Concerning chain control sets, from Propositions 9.9 and 9.10 of \cite{cbsm} and the fact that $\pi_2(\MC(w))=E(w)$ it is straightforward to see that 
\begin{equation}
\label{attraction}
E(w)\subset\gamma_1'\cdots\gamma_m'\left(E(1)\right)\cap \gamma_1\cdots\gamma_n\left(E(w_0)\right).
\end{equation}

We can now prove the following.

\begin{theorem}
It holds that $\Theta(\phi)=\Theta(\SC)$ if and only if  
\begin{equation}
\label{closure}
\cl\left(D_{\Theta}(w)\right)=E_{\Theta}(w)
\end{equation}
for any $\Theta\subset\Lambda$ and any $w\in\WC$.
\end{theorem}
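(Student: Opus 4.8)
The plan is to prove both implications. The direction $\Theta(\phi)=\Theta(\SC)\Rightarrow$ (\ref{closure}) is the substantive one; the converse follows by a counting argument. For the forward direction, fix $w\in\WC$. By Theorem \ref{chaincontrol} we already know $D_\Theta(w)\subset E_\Theta(w)$ and that $E_\Theta(w)$ is closed, so the inclusion $\cl(D_\Theta(w))\subset E_\Theta(w)$ is automatic; the real content is the reverse inclusion $E_\Theta(w)\subset\cl(D_\Theta(w))$. The strategy is to combine the two ``sandwich'' characterizations already recorded in the excerpt: on the control-set side, Lemma \ref{intersection} together with (\ref{domain}) gives
\begin{equation*}
D(w)_0=\gamma'_1\cdots\gamma'_m(D(1))\cap\gamma_1\cdots\gamma_n(D(w_0)),
\end{equation*}
and on the chain-control side, (\ref{attraction}) gives $E(w)\subset\gamma'_1\cdots\gamma'_m(E(1))\cap\gamma_1\cdots\gamma_n(E(w_0))$. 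Under the hypothesis $\Theta(\phi)=\Theta(\SC)$, Proposition \ref{invariant} yields $E(1)=D(1)$ and $E(w_0)=\cl(D(w_0))$. Since the exhausting operators $\gamma_i,\gamma'_j$ are continuous (they are $\pi_i^{-1}\pi_i$, and the $\pi_i$ are proper maps between compact manifolds, so they take closures into closures and commute with closure up to the closure on the outside), we get
\begin{equation*}
E(w)\subset\gamma'_1\cdots\gamma'_m(D(1))\cap\gamma_1\cdots\gamma_n\bigl(\cl(D(w_0))\bigr)\subset\cl\Bigl(\gamma'_1\cdots\gamma'_m(D(1))\cap\gamma_1\cdots\gamma_n(D(w_0))\Bigr)=\cl\bigl(D(w)_0\bigr)=\cl(D(w)).
\end{equation*}
Hence $E(w)=\cl(D(w))$ in the maximal flag manifold $\F$. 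Projecting by $\pi_\Theta$ and using $D_\Theta(w)=\pi_\Theta(D(w))$, $E_\Theta(w)=\pi_\Theta(E(w))$ (the latter from the Morse-set description $\pi_2(\MC(w))=E(w)$ and its compatibility with the projections $\F\to\F_\Theta$), together with $\pi_\Theta(\cl(D(w)))=\cl(\pi_\Theta(D(w)))$ since $\pi_\Theta$ is a proper continuous surjection, gives $E_\Theta(w)=\cl(D_\Theta(w))$ for every $\Theta$ and every $w$.

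For the converse, assume (\ref{closure}) holds for all $\Theta$ and all $w$. Taking $\Theta=\emptyset$ and letting $w$ range over $\WC$, the map $w\mapsto D(w)$ and the map $w\mapsto E(w)$ then have the same fibers: $D(w_1)=D(w_2)\Leftrightarrow\cl(D(w_1))=\cl(D(w_2))\Leftrightarrow E(w_1)=E(w_2)$ (the middle equivalence because $D(w)$ has dense core hence is determined by its closure up to the core). By Theorem \ref{controlsets} and Proposition \ref{chaincontrolsets} these fiber-partitions are exactly the coset partitions of $\WC$ by $\WC_{\Theta(\SC)}$ and by $\WC_{\FC}(\SC)=\WC_{\Theta(\phi)}$ respectively; equality of the partitions forces $\WC_{\Theta(\SC)}=\WC_{\Theta(\phi)}$, hence $\Theta(\SC)=\Theta(\phi)$ since a subset of $\Lambda$ is determined by the parabolic subgroup it generates.

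The main obstacle I anticipate is the careful handling of closures through the exhausting operators $\gamma_i$ and through $\pi_\Theta$: one needs that $\gamma_i(\cl X)=\cl(\gamma_i X)$ and that intersection interacts correctly with closure, i.e. that the inner closure in the displayed chain of inclusions can really be pulled out to a single outer closure. This is true here because each $\pi_i:\F\to\F_i$ is a fiber bundle with compact total space, so $\gamma_i=\pi_i^{-1}\pi_i$ is a closed map that preserves closures, and because $D(w_0)$ and $D(1)$ are the invariant control sets whose closures are the respective chain control sets, so the two sets being intersected are in ``general position'' relative to the exhausting fibers in the sense already encoded in (\ref{domain}) and (\ref{attraction}); I would justify the closure-intersection step by noting that $\cl(D(w)_0)=\cl(D(w))$ and that (\ref{domain}) exhibits $D(w)_0$ as an intersection of the two relevant sets, so replacing each factor by its closure and taking the overall closure can only enlarge, giving the needed containment. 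A secondary point to verify is that Proposition \ref{invariant}'s hypothesis is exactly $\Theta(\SC)=\Theta(\phi)$, which it is, so no extra input is required there.
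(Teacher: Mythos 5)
Your proposal follows essentially the same route as the paper's proof: the converse direction by matching the coset partitions of $\WC$ induced by $w\mapsto D(w)$ and $w\mapsto E(w)$ (the paper phrases this as $|\WC_{\Theta(\SC)}|=|\WC_{\Theta(\phi)}|$ combined with the standing inclusion $\Theta(\SC)\subset\Theta(\phi)$), and the substantive direction by combining Proposition \ref{invariant} with (\ref{domain}), (\ref{attraction}), Lemma \ref{intersection} and the commutation of the exhausting operators $\gamma_i$ with closure, concluding $\cl(D(w))=E(w)$ on $\F$ and then projecting by $\pi_{\Theta}$. The closure-versus-intersection step you single out as the main obstacle, namely $\AC^*(D^*(w))\cap\cl\left(\AC(D(w))\right)\subset\cl(D(w)_0)$, appears in exactly the same form (and with no further justification) in the paper's own argument, so your write-up matches the paper's proof.
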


\begin{proof}
If the equality (\ref{closure}) is true for any $\Theta\subset\Lambda$ we have in particular that $|\WC_{\Theta(\SC)}|=|\WC_{\Theta(\phi)}|$. Since $\Theta(\SC)\subset\Theta(\phi)$ always holds, we must have $\WC_{\Theta(\SC)}=\WC_{\Theta(\phi)}$ and conquesently $\Theta(\phi)=\Theta(\SC)$.

If reciprocally we assume that $\Theta(\phi)=\Theta(\SC)$, Proposition \ref{invariant} implies that the closure of any invariant control is a chain control set which by Proposition \ref{invariant} and equations (\ref{domain}) and (\ref{attraction}) imply that
$$E(w)\subset\AC^*(D^*(w))\cap \gamma_1\cdots\gamma_n\left(\cl(D(w_0))\right).$$
Moreover, since $\pi_i:\F\rightarrow\F_i$ is a continous open map between compact topological spaces, we have that $\gamma_i(\cl(X)))=\cl(\gamma_i(X))$ for any subset $X$ of $\F$. Therefore 
$$\gamma_1\cdots\gamma_n\left(\cl(D(w_0))\right)=\cl\left(\gamma_1\cdots\gamma_n(D(w_0))\right)$$
and so
$$E(w)\subset\AC^*(D^*(w))\cap \cl\left(\AC(D(w))\right).$$
Since by Lemma \ref{intersection} it holds that $\AC^*(D^*(w))\cap \AC(D(w))=D(w)_0$ we get that 
$$E(w)\subset\AC^*(D^*(w))\cap \cl\left(\AC(D(w))\right)\subset \cl(D(w)_0)=\cl(D(w))\subset E(w)$$
showing that $\cl(D(w))=E(w)$ for any $w\in\WC$.

Since for any $\Theta\subset\Lambda$ and any $w\in\WC$ we have that
$$\pi_{\Theta}(D(w))=D_{\Theta}(w)\;\;\;\mbox{ and }\;\;\;\pi_{\Theta}(E(w))=E_{\Theta}(w)$$
the result follows.
\end{proof}

\begin{remark}
Let $A, B$ are $n\times n$-matrices with trace zero and consider on $\R^n$ the bilinear system
$$\dot{x}(t)=\left(A+u(t)B\right)x(t).$$
It is well known that the behavious of such system is intrinsically connected with its induced control-affine system on the Grassmannians spaces. Since $A, B\in sl(n)$, we have a well defined right-invariant system on the semisimple Lie group $Sl(n)$. It is well known that any Grassmanian space is a flag manifold $\F_{\Theta}$ of $Sl(n)$ for some appropriated $\Theta$. Moreover, any system induced by the bilinear one on a Grassmanian space coincides with the system induced by the right-invariant system on the corresponding flag. Therefore, the above results can be used, for instance, in order to analyze the behaviour of bilinear systems on $\R^n$.
\end{remark}


\begin{thebibliography}{99}


\bibitem{bsm} \textsc{Braga Barros, C.J. and L.A.B. San Martin}, \emph{Chain
control sets for semigroup actions}, Mat. Apl. Comp., 15 (1996),
pp.~257-276.

\bibitem{cbsm} \textsc{Braga Barros, C.J. and L.A.B. San Martin}, \emph{%
Chain transitive sets for flows on flag bundles}, Forum Math., 19 (2007),
pp.~19-60.

\bibitem{bro} \textsc{Brocke, R. W.}, \emph{System theory on group manifolds and coset spaces}, SIAM Journal on Control, {\bf 10} No 2 (1972), pp. 265 - 284.


\bibitem{ck} \textsc{Colonius, F. and W. Kliemann}, \emph{The Dynamics of
Control}, Birkhauser, (2000).

\bibitem{ask} \textsc{Da Silva, A. and C. Kawan}, \emph{Hyperbolic chain
control sets on flag manifolds}, Journal of Dynamics and Control Systems, {\bf 21} No. 4 (2015), pp. 1-21.

\bibitem{ask1} \textsc{Da Silva, A. and C. Kawan}, \emph{Invariance entropy of hyperbolic control sets}, Discrete and Continuous Dynamical Systems {\bf 36} No.1 (2016), pp. 97-136.




\bibitem{Jurd} \textsc{Jurdjevic V. and H. Sussmann}, \emph{Control systems on Lie groups},
Journal of Differential Equations, {\bf 12} (1972), pp 313-329.

\bibitem{Ka2} \textsc{Kawan, C.}, \emph{Invariance Entropy for Deterministic
Control Systems -- An Introduction}, Lecture Notes in Mathematics, 2089,
Springer-Verlag, Berlin, 2013.

\bibitem{psm} \textsc{Patrao, M. and L.A.B. San Martin}, \emph{Semiflows on
Topological Spaces:\ Chain Transitivity and Semigroups}, Journal of Dynamics
and Differential Equations, 19 (1), (2007).

\bibitem{sm} \textsc{San Martin, L.}, \emph{Order and domains of attractions of control sets in flag manifolds}, Journal of Lie theory {\bf 8} No. 2, (1998) pp. 335-350.


\bibitem{smls} \textsc{San Martin, L.A.B. and L. Seco}, \emph{Morse and
Lyapunov spectra and dynamics on flag bundles}, Ergod. Th. \& Dynam. Sys. 30
(2010), 3, 893--922.

\bibitem{smt} \textsc{San Martin, L.A.B. and P.A. Tonelli}, \emph{Semigroup
actions on Homogeneous Spaces}, Semigroup Forum, 50 (1995), pp.~59--88.

\end{thebibliography}
\end{document}